\documentclass[11pt]{amsart}
\usepackage{amsmath,amsthm, amscd, amssymb, amsfonts}
\usepackage[dvips, dvipsnames, usenames]{color}

\usepackage{graphicx} 

\usepackage{mathrsfs} 
\usepackage{figsize}

\usepackage[ruled,vlined]{algorithm2e}
\usepackage{algorithmicx}

\usepackage{float} 

\usepackage{tikz}

\usepackage{tkz-graph}
\usetikzlibrary{arrows}
\usetikzlibrary{calc}
\usetikzlibrary{matrix}
\usetikzlibrary{decorations.pathreplacing}  
\usepgflibrary{shapes.geometric} 
\usetikzlibrary[topaths]
\usetikzlibrary{positioning}
\usetikzlibrary{arrows}  
\usepackage{mathrsfs}
\usepackage{tkz-berge}

\usepackage{float}
\usepackage{faktor}
\usepackage{tabularx}

\theoremstyle{plain}
\newtheorem{theorem}{Theorem}[section]
\newtheorem{corollary}[theorem]{Corollary}
\newtheorem{proposition}[theorem]{Proposition}
\newtheorem{lemma}[theorem]{Lemma}

\newtheorem{remark}[theorem]{Remark}
\newtheorem{example}[theorem]{Example}

\usepackage[colorlinks]{hyperref}
\hypersetup{
    colorlinks=true,
    linkcolor=blue,
    citecolor=blue,
    urlcolor=cyan,
}

\numberwithin{equation}{section}

\def\N{\mathbb{N}}

\def\Z{\mathbb{Z}}

\def\F{\mathbb{F}} 

\def\frakN{\mathfrak{N}}

\def\fix{\mathrm{fix}}
\def\supp{\mathrm{supp}}
\def\ord{\mathrm{ord}}

\def\psl{\mathrm{PSL}}

\def\nil{\mathrm{nil}} 
  
\def\stab{\mathrm{Stab}}
\def\fix{\mathrm{Fix}}
\def\orb{\mathrm{Orb}}

\setlength{\parskip}{1ex}

\title[On the nilpotent graph of a finite group]{On the nilpotent graph of a finite group}

\thanks{...}

\author{Jaime Torres}
 \address{Universidad del Norte,  Departamento de Matem\'aticas y Estad\'istica, Km 5 via a Puerto Colombia, Barranquilla, Colombia.}
  \email{atjaime@uninorte.edu.co}

\author{Ismael Gutierrez}
 \address{Universidad del Norte,  Departamento de Matem\'aticas y Estad\'istica, Km 5 via a Puerto Colombia, Barranquilla, Colombia.}
  \email{isgutier@uninorte.edu.co
 }

\author{E. J. Garc\'ia-Claro}
 \address{Departamento de Matemáticas, Universidad Autónoma Metropolitana, Unidad Iztapalapa, Código postal 09340, Ciudad de México - México
 }
  \email{eliasjaviergarcia@gmail.com 
 }

\subjclass[2010]{20D60, 05C25, 20D15, 20D20 }
\keywords{Graphs associated with groups; nilpotent group; hypercenter of a group; nilpotentizer; strongly self-centralizing subgroups}

\begin{document}

\begin{abstract}
If $G$ is a non-nilpotent group and $\nil(G)= \{g\in G:  \langle g, h\rangle \ \text{is nilpotent for all} \ h\in G\}$, the nilpotent graph of $G$ is the graph with set of vertices $G - \nil(G)$  in which two distinct vertices are related if they generate a nilpotent subgroup of $G$. Several properties of the nilpotent graph associated with a finite non-nilpotent group $G$ are studied in this work. Lower bounds for the clique number and the number of connected components of the nilpotent graph of $G$ are presented in terms of the size of its Fitting subgroup and the number of its strongly self-centralizing subgroups, respectively. It is proved the nilpotent graph of the symmetric group of degree $n$ is disconnected if and only if $n$ or $n-1$ is a prime number, and no finite non-nilpotent group has a self-complementary nilpotent graph. For the dihedral group $D_{n}$, it is determined the number of connected components of its nilpotent graph is one more than $n$ when $n$ is odd; or one more than the $2'$-part of $n$ when $n$ is even. In addition, a formula for the number of connected components of the nilpotent graph of $\psl(2,q)$, where $q$ is a prime power, is provided. Finally, necessary and sufficient conditions for specific subsets of a group, containing connected components of its nilpotent graph, to contain one of its Sylow $p$-subgroups are studied; and it is shown the nilpotent graph of a finite non-nilpotent group $G$ with $\nil(G)$ of even order is non-Eulerian. 

\end{abstract}

\maketitle


\section*{Introduction}\label{Section}

 Applying techniques and concepts from graph theory in the study of group theory has been a topic of growing interest in the last decades. This approach provides a visual perspective that can shed light on the group's properties in question. An outstanding example of this connection between groups and graphs is the non-commutative graph associated with a non-commutative group $G$, introduced by A. Abdollahi et al., in \cite{Abdollahi1}. This is the graph with set of vertices $G - Z(G)$  in which two distinct vertices are related if they do not commute. This graph captures several commutativity relations between the group elements and reveals interesting patterns in its inner structure. For example, if $G$ is a finite non-abelian nilpotent group and $H$ is a finite non-abelian group such that $|G| = |H|$ and the non-commutative graph of $H$ is isomorphic to the non-commutative graph of $G$, then $H$ is nilpotent. Later, A.K. Das et al. studied the genus of the commutative graph \cite{Das1}, which is precisely the complement of the non-commutative graph, i.e., the same set of vertices is considered, but two different vertices are related if they commute. That paper proved that the order of a finite non-abelian group is bounded by a function that depends on the genus of its respective commutative graph. 

A few years later, A. Abdollahi et al. introduced in \cite{Abdollahi2} the non-nilpotent graph associated with a group $G$, where the vertex set is $G - \nil(G)$ and in which two distinct vertices are related if they generate a non-nilpotent subgroup of $G$. An interesting fact about this graph is that, for finite non-nilpotent groups, the graph is always connected. A generalization of this graph, involving formations of finite groups, was presented by A. Lucchini et al. in \cite{Lucchini2}. Sometime later, as in the case of the non-commutative graph, a research was carried out on the genus of the nilpotent graph (the complement of the non-nilpotent graph) \cite{Das2}. 


This work studies further properties of the nilpotent graphs associated with finite non-nilpotent groups. The paper is organized as follows: Section \ref{Section1} introduces several necessary concepts and results from graph and group theory that will be used later. In Section \ref{Section2}, a Sagemath algorithm to compute the nilpotent graph of a non-nilpotent group is provided along with some examples of nilpotent graphs that will be useful to illustrate later results. It is also proved that the nilpotent graph of a finite group $G$ is bipartite if and only if $G$ is isomorphic to the symmetric group of degree three, among other results. Section \ref{Section3} establishes several lower bounds on the clique number and the number of connected components of the nilpotent graph of $G$ in terms of the size of its Fitting subgroup and the number of its strongly self-centralizing subgroups, respectively. In particular, it is shown the nilpotent graph of $S_{n}$ (for $n\geq 5$) is disconnected if and only if $n$ or $n-1$ is a prime number. In Section \ref{Section4}, sufficient conditions to guarantee the connectedness of the nilpotent graph are provided. In addition, it is proved that the nilpotent graph of a finite group is connected if and only if the nilpotent graph of its direct product with any finite nilpotent group is connected. It is also proved that the nilpotent graph of any finite non-nilpotent group is never self-complementary (among other results). In Sections \ref{Section5} and \ref{Section6}, it is demonstrated that the nilpotent graphs of the dihedral group $D_n$ and the projective special linear groups $\psl(2, q)$ where $q$ is a prime power are always disconnected by offering formulas for the number of the connected components of their respective graphs. Finally, necessary and sufficient conditions for certain subsets of a group (that contain connected components of its nilpotent graph) to contain one of its Sylow $p$-subgroups are studied, and it is shown that the nilpotent graph of a finite non-nilpotent group $G$ with $\nil(G)$ of even order is non-Eulerian.



\section{Preliminaries}
\label{Section1}

\subsection*{Basics on Graph Theory.}
We recall some definitions, notations, and results concerning elementary graph theory for the reader's convenience and later use. The reader is referred for undefined terms and concepts of graph theory to \cite{Grossman} or \cite{Nora}.\\

An undirected finite graph or simply a graph  $\Gamma = (V, E)$ consists of a non-empty finite set $V$ whose elements are called vertices and a set $E\subseteq \binom{V}{2}$ whose elements are called edges. Here $\binom{V}{2}$ is the set of two-element subsets of $V$. Typically, the letters $n$ and $m$ are used so that $|V| = n$ (the order of $\Gamma$) and $|E| = m$ (the size of $\Gamma$). By this definition, an undirected graph cannot have self-loops since $v\in V$ holds $\{v, v\} = \{v\} \notin \binom{V}{2}$. Further, all the graphs considered in this work are simple; no multiple edges connect two vertices. 

In the following, $\Gamma$ always denotes a graph. If $x$ and $y$ are vertices of $\Gamma$, then it is said that $x$ is adjacent (or related) to $y$ if $\{x, y\}$ is an edge.  In this case, the edge's endpoints are $x$ and $y$. 

A subgraph of $\Gamma$ is a graph $\Gamma'$ whose vertices and edges form subsets of the vertices and edges of $\Gamma$. The neighbourhood of a vertex $w$ in $\Gamma$, denoted by $N_\Gamma(w)$ is defined as the set 
\[
N_\Gamma(w) := \{v \in V : \{v, w\} \in E \}.
\]
The degree of $w$, denoted by $\deg(w)$, is the number of edges incident with $w$. That is, $\deg(w)= |N_\Gamma(w)|$. A vertex of degree zero is called an isolated vertex or isolated point. 

Two graphs  $\Gamma_1=(V_1,E_1)$ and  $\Gamma_2 = (V_2,E_2)$    are said to be isomorphic if there is a bijection $f :  V_1\longrightarrow V_2$  such that 
\[\{u,v\}\in E_1 \ \Longleftrightarrow  \{f(u), f(v)\} \in E_2.\]
	
It means that if an edge joins two vertices in $\Gamma_1$, the corresponding vertices are joined by an edge in $\Gamma_2$.
 
A path in $\Gamma$ is a sequence of distinct vertices $v_1, v_2,\ldots, v_k$ such that $\{v_i, v_{i+1}\}\in E$ for $1\leq i < k$. If $v_1, v_2,\ldots, v_k$ is a path, it is said that it goes from $v_1$ to $v_k$ (or that it connects $v_1$ with $v_k$) and that the vertices $v_1$ and $v_k$ are its endpoints. It is said that $\Gamma$ is connected if there exists a path connecting any two vertices; otherwise, $\Gamma$ is disconnected. The relation defined over the set of vertices $V$ in which two vertices are related if there exists a path between them is an equivalence relation. A connected component of $\Gamma$ is an equivalence class under this relation; or equivalently, a maximal connected subgraph.\\

Some useful notation is the following:
\begin{align*}
\kappa(\Gamma)  & := \text{the number of connected components of $\Gamma$}\\
\mathcal{C}(\Gamma) & :=\{C_1,\ldots, C_{\kappa(\Gamma)}\}, \  \text{the set of all connected components of $\Gamma$} \\
\kappa_j(\Gamma)  & := |C_j|, \ \text{for $j=1,\ldots \kappa(\Gamma)$}.
\end{align*}


A clique in $\Gamma$ is a subgraph in which any two vertices are adjacent, i.e., a complete subgraph of $\Gamma$. A clique with the largest possible size is called a maximum clique.  The size $\omega(\Gamma)$ of a maximum clique in $\Gamma$ is called the clique number of $\Gamma$.

A bipartite graph is a graph in which the vertices can be divided into two disjoint sets such that all edges connect a vertex in one set to a vertex in another. There are no edges between vertices in the disjoint sets.

The complement of $\Gamma$ is a graph $\overline{\Gamma}$ on the same vertices such that two distinct vertices of $\overline{\Gamma}$ are adjacent if and only if they are not adjacent in $\Gamma$.



\subsection*{Basics on Group Theory.} 
The symmetric group on a finite set $\Omega$ is the group whose elements are all permutations of $\Omega$, that is, all bijective functions from $\Omega$ to $\Omega$, and whose group operation is the composition. The symmetric group of degree $n$, denoted by $S_{n}$, is the symmetric group defined on the set $\Omega = [n]:=\{1,...,n\}$. For more details on notation and definitions see \cite{Robinson_1995} or \cite{Huppert}.\\ 

Let $\sigma \in S_{n}$. The expression
\[
\sigma = (a_1 a_2\cdots a_r)
\]
denotes the permutation $\sigma$ that $\sigma(a_1) = a_2,\ldots, \sigma(a_{r-1}) = a_r$, $\sigma(a_r) = a_1$ and $\sigma(a) = a$ for other $a$. This permutation is called a $k$-cycle in $S_{n}$. A 2-cycle $\tau =(a b)$ is a transposition. 
The support of the permutation $\sigma\in S_{n}$, denoted by $\supp(\sigma)$ is defined as follows:
\[
\supp(\sigma) := \{j\in [n]: \sigma(j)\neq j\}.
\]
So all the action takes place in the support. The identity permutation has empty support.
Two permutations $\sigma$ and $\pi$ in $S_{n}$ are disjoint if their supports are disjoint. In  particular, two cycles $\sigma = (a_1 a_2\cdots a_k)$ and $\pi = (b_1 b_2\cdots b_r)$ are disjoint if the underlying sets $\{a_1, a_2,\cdots a_k\}$ and $\{b_1, b_2,\cdots b_r\}$, are disjoint. It is well known that disjoint permutations commute; that is, if $\sigma$ and $\pi$  are disjoint, then $\sigma \pi = \pi\sigma$. Finally, the fix of $\sigma\in S_{n}$ are defined by
\[
\fix(\sigma) := \{x\in [n] : \sigma(x)=x\}.
\]


Let $X$ be a left $G$-set. If  $x\in X$, its orbit is the set  $\orb_G(x)=\{g\cdot x: g \in G\}$, and the stabilizer of $x$, denoted by  $\stab_G(x)= \{g\in G : g\cdot x=x \text{ for all } x\in X\}$.





For any group $G$ a subgroup $\zeta_j(G)$ is defined recursively by setting $\zeta_0(G) = 1$, $\zeta_1(G)=Z(G)$, and $\zeta_{j+1}(G)/\zeta_j(G) = Z(G/\zeta_j(G))$ for $j\geq 1$. Every subgroup $\zeta_j(G)$ is characteristic in $G$. The chain 
\[
1 = \zeta_0(G) \leq \zeta_1(G) \leq \zeta_2(G) \leq \cdots
\]
is called the upper, or ascending, central series of $G$. It is said that $G$ is nilpotent, of  nilpotency class $c$, if $\zeta_{c-1}(G) < \zeta_c(G)=G$ for some integer $c$, depending of $G$. The characteristic subgroup 
\[
Z^\ast(G) := \bigcup_{j=0}^\infty \zeta_j(G)
\]
is called the hypercenter of $G$. 

For example, the hypercenter of the symmetric group of degree $n$, denoted by $S_{n}$, is trivial for $n \neq 2$.

\begin{example}\cite[Example 2.9]{Clement2017}. 
\label{NilpotentizerDihedral}
Let $n \in \mathbb{Z}_{>0}$, with $n = 2^km$, $m \geq 3$ and $\gcd(2, m) = 1$. Then $D_n/Z^*(D_n) \cong D_m$.
\end{example}


It is well known that $G$ is nilpotent if and only if the central ascending series is stationary in $G$.  

From now on, the symbol $G$ denotes a finite group. Some of the fundamental characterizations of a finite nilpotent group are presented below. This list of equivalent statements to the nilpotency of $G$ will be essential in later sections.

 \begin{theorem}\cite[Theorem 5.2.4]{Robinson_1995}.
\label{NilpotentEquivalence}
The following statements about $G$ are equivalent: 
    \begin{enumerate}
        \item $G$ is nilpotent
        \item  Every subgroup of $G$ is subnormal.
        \item Every proper subgroup of $G$ is properly contained in its normalizer.
        \item Every maximal subgroup of $G$ is normal.
        \item  Every Sylow subgroup of $G$ is normal.
        \item $G$ is the direct product of its Sylow subgroups.
        \item Any two elements whose orders are relative primes, commute.
    \end{enumerate}
\end{theorem}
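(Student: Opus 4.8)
The plan is to prove the seven statements equivalent through a single cyclic chain of implications, $(1)\Rightarrow(2)\Rightarrow(3)\Rightarrow(4)\Rightarrow(5)\Rightarrow(6)\Rightarrow(7)\Rightarrow(1)$, so that each condition follows from its predecessor and the loop closes back to nilpotency. Most links are short; the two places that carry real content are $(4)\Rightarrow(5)$, which I expect to be the main obstacle, and $(7)\Rightarrow(1)$, which must import the fact that finite $p$-groups are nilpotent.

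For $(1)\Rightarrow(2)$ I would use the upper central series $1=\zeta_0(G)\leq\cdots\leq\zeta_c(G)=G$: given $H\leq G$, the chain $H=H\zeta_0(G)\leq H\zeta_1(G)\leq\cdots\leq H\zeta_c(G)=G$ is subnormal, because $\zeta_{i+1}(G)/\zeta_i(G)$ is central in $G/\zeta_i(G)$, which forces $H\zeta_i(G)\trianglelefteq H\zeta_{i+1}(G)$. For $(2)\Rightarrow(3)$, if $H<G$ is subnormal I would pick the first term of a subnormal series from $H$ to $G$ that properly contains $H$; since $H$ is normal in that term, the term lies in $N_G(H)$, so $N_G(H)>H$. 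For $(3)\Rightarrow(4)$, a maximal subgroup $M$ satisfies $M<N_G(M)$, and maximality forces $N_G(M)=G$, i.e. $M\trianglelefteq G$.

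The step $(4)\Rightarrow(5)$ is where I expect the main difficulty, since it is the one genuinely using Sylow theory. If some Sylow $p$-subgroup $P$ were not normal, then $N_G(P)<G$, so $N_G(P)$ sits inside a maximal subgroup $M$, which is normal by $(4)$. As $P\leq M$ is a Sylow $p$-subgroup of $M$ as well, the Frattini argument gives $G=M\,N_G(P)=M$, contradicting $M<G$; hence every Sylow subgroup is normal. For $(5)\Rightarrow(6)$, the normal Sylow subgroups $P_1,\dots,P_k$ attached to the distinct prime divisors have pairwise coprime orders, so each intersects the product of the others trivially and $|P_1\cdots P_k|=|G|$, giving the internal direct product $G=P_1\times\cdots\times P_k$. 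For $(6)\Rightarrow(7)$, elements of coprime order are supported on disjoint sets of direct factors and therefore commute.

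Finally, for $(7)\Rightarrow(1)$ I would first recover normality of the Sylow subgroups and then reduce to the $p$-group case. For distinct primes, every element of a Sylow $p$-subgroup $P_i$ has order coprime to every element of a Sylow $q$-subgroup $P_j$, so $(7)$ yields $[P_i,P_j]=1$; the pairwise commuting subgroups $P_1,\dots,P_k$ then generate a subgroup of order $|G|$, forcing $G=P_1\times\cdots\times P_k$ with each factor normal. Each finite $p$-group is nilpotent (its upper central series cannot stabilize before reaching the whole group, as a nontrivial finite $p$-group has nontrivial centre), and a finite direct product of nilpotent groups is nilpotent, so $G$ is nilpotent and the cycle closes. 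The only external ingredients are the Frattini argument and the nilpotency of finite $p$-groups; everything else is bookkeeping with the upper central series and Sylow theory.
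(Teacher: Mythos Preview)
The paper does not supply its own proof of this theorem: it is quoted verbatim as a preliminary result from Robinson's textbook (cited as \cite[Theorem 5.2.4]{Robinson_1995}) and is used later without argument. So there is no in-paper proof to compare against.

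That said, your cyclic proof $(1)\Rightarrow(2)\Rightarrow\cdots\Rightarrow(7)\Rightarrow(1)$ is correct and is essentially the standard textbook route. Each implication checks out: the subnormal chain $H\zeta_i(G)$ for $(1)\Rightarrow(2)$ works because $\zeta_{i+1}/\zeta_i$ is central in $G/\zeta_i$; the normalizer condition and the Frattini argument for $(4)\Rightarrow(5)$ are applied correctly; and in $(7)\Rightarrow(1)$ your observation that pairwise-commuting Sylow subgroups for the distinct primes generate a subgroup of order $|G|$ legitimately forces the direct-product decomposition, after which nilpotency of finite $p$-groups closes the loop. Nothing is missing.
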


\begin{theorem}\cite[Theorem 2.5]{Clement2017}.
\label{ClementNil}
Let $N\unlhd G$. If $N \leq \zeta_i(G)$ for some $i\in \N$ and $G/N$ is nilpotent, then $G$ is nilpotent.
\end{theorem}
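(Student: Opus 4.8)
The plan is to transport the nilpotency of $G/N$ back to $G$ by comparing the two upper central series through the canonical projection $\pi \colon G \to G/N$. The crucial point is that the hypothesis $N \leq \zeta_i(G)$ furnishes a ``head start'' of $i$ steps, so I expect the nilpotency class of $G$ to be bounded by $i$ plus the nilpotency class of $G/N$. Concretely, I would first establish the following claim by induction on $j \geq 0$:
\[
\pi^{-1}\bigl(\zeta_j(G/N)\bigr) \leq \zeta_{i+j}(G).
\]

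For the base case $j = 0$, observe that $\zeta_0(G/N) = 1$, so $\pi^{-1}(1) = N$, and the hypothesis gives $N \leq \zeta_i(G) = \zeta_{i+0}(G)$, as required. For the inductive step, write $H = \pi^{-1}\bigl(\zeta_j(G/N)\bigr)$ and $K = \pi^{-1}\bigl(\zeta_{j+1}(G/N)\bigr)$, and assume $H \leq \zeta_{i+j}(G)$. The defining relation $\zeta_{j+1}(G/N)/\zeta_j(G/N) = Z\bigl((G/N)/\zeta_j(G/N)\bigr)$ says precisely that every element of $\zeta_{j+1}(G/N)$ is central modulo $\zeta_j(G/N)$; pulling this back through $\pi$ yields $[K, G] \leq H$. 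Combining with the inductive hypothesis gives $[K, G] \leq \zeta_{i+j}(G)$, which, by the commutator characterization of the upper central series (namely $x \in \zeta_{i+j+1}(G)$ exactly when $[x,g] \in \zeta_{i+j}(G)$ for all $g \in G$), is exactly the statement $K \leq \zeta_{i+j+1}(G)$. This closes the induction.

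Finally, since $G/N$ is nilpotent there is a $c$ with $\zeta_c(G/N) = G/N$, whence $\pi^{-1}\bigl(\zeta_c(G/N)\bigr) = G$. The claim then gives $G \leq \zeta_{i+c}(G)$, so $\zeta_{i+c}(G) = G$ and $G$ is nilpotent, completing the argument. Note that the classical fact that $G/Z(G)$ nilpotent implies $G$ nilpotent is recovered as the special case $N = \zeta_1(G)$, $i = 1$.

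I anticipate the step requiring the most care to be the inductive passage: translating ``$\zeta_{j+1}(G/N)$ is central modulo $\zeta_j(G/N)$'' into the commutator inclusion $[K, G] \leq H$, and then re-reading $[K, G] \leq \zeta_{i+j}(G)$ as membership $K \leq \zeta_{i+j+1}(G)$. Both directions rely on the lattice correspondence between subgroups of $G/N$ and subgroups of $G$ containing $N$, together with the recursive definition of $\zeta_{j+1}$ as the preimage of the center of a quotient, so these must be invoked consistently on both sides of the projection.
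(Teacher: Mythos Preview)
Your argument is correct and is the standard textbook proof: the induction on $j$ showing $\pi^{-1}(\zeta_j(G/N)) \leq \zeta_{i+j}(G)$ is exactly the right mechanism, and both the base case and the inductive step are handled properly. The translation of centrality modulo $\zeta_j(G/N)$ into the commutator inclusion $[K,G] \leq H$, and then into $K \leq \zeta_{i+j+1}(G)$, is routine once one remembers that $N \leq H$, so the correspondence theorem applies cleanly; there is no hidden subtlety there despite your caution.

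As for comparison with the paper: the paper does not supply a proof of this statement at all. It is quoted verbatim as \cite[Theorem 2.5]{Clement2017} and used later (in Lemma~\ref{QuotientNilAdjacency}) as a black box. So there is nothing to compare against in the paper itself; your write-up is essentially the proof one finds in the cited source, and it would serve perfectly well if the paper wished to be self-contained on this point.
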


The notation $\pi(G)$ denotes the set of all primes that divide the order of $G$. 
For a prime number $p$, $O_p(G)$ denotes the intersection of all Sylow $p$-subgroups of $G$, and it is well known that $O_p(G)$ is the largest normal $p$-subgroup of $G$. The Fitting subgroup of $G$, denoted by $\mathrm{Fit}(G)$ or $\mathrm{F}(G)$ is defined as follows:
\[
\mathrm{F}(G):= \langle O_{p}(G):  p\in  \pi(G)\rangle. 
\]
It is clear from this definition that $\mathrm{F}(G)$ is the direct product $\prod O_p(G)$ over the prime divisors $p$ of $|G|$ and that $O_p(G)$ is a Sylow $p$subgroup of $\mathrm{F}(G)$. Then, using Theorem \ref{NilpotentEquivalence}, the Fitting subgroup is a nilpotent subgroup of $G$. Further, it is the largest normal nilpotent subgroup of $G$.


\section{The nilpotent graph of a finite group and some examples}\label{Section2}

This section provides a SageMath \cite{SageMath} algorithm to compute the nilpotent graph of a non-nilpotent group, along with some examples of nilpotent graphs that will be useful to illustrate later results. It is proved that the nilpotent graph of a finite group $G$ is never a star and that it is bipartite if and only if $G$ is isomorphic to the symmetric group of degree three. A formula for the degree of a nilpotent graph is offered.\\ 

The letter $\frakN$ denotes the class of all finite nilpotent groups through this paper.
 If $h\in G$,  the nilpotentizer of $h$ in $G$ is the set $\nil_G(h) := \{g\in G:  \langle g, h\rangle \ \text{is nilpotent} \}$. The nilpotentizer of $G$ is the set $\nil(G) :=\bigcap_{h\in G}\nil_G(h)= \{g\in G:  \langle g, h\rangle \ \text{is nilpotent for all} \ h\in G\}$.\\

It is not known exactly when the subset $\nil(G)$ is a subgroup of $G$, but in some cases it is. For instance, if a group $G$ satisfies the maximal condition on its subgroups or $G$ is a finitely generated soluble group, then  $Z^\ast(G) = \nil(G) = R(G)$, where $R(G)$ denotes the soluble radical of $G$, which is the largest soluble normal subgroup of $G$ (see \cite{Abdollahi1} or \cite[Lemma 3.1]{Das2}). In particular, if $G$ is finite, this holds. 




Let $G$ be a non-nilpotent finite group. The nilpotent graph of $G$, denoted by $\Gamma_\frakN(G)$, is an undirected graph whose vertex set is $G- \nil(G)$, and two vertices $g$ and $h$ are adjacent if and only if $\langle g, h\rangle$ is a nilpotent subgroup of $G$.\\


\begin{proposition}
 $\Gamma_\frakN(G)$ is bipartite if and only if $G \cong S_{3}$.
\end{proposition}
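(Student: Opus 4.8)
The plan is to prove the two implications separately, handling the direction $G\cong S_{3}\Rightarrow$ bipartite by direct computation, and the converse by exploiting that nilpotent subgroups induce cliques. For the forward direction I would first compute $\nil(S_{3})=Z^\ast(S_{3})=1$, so the vertex set of $\Gamma_\frakN(S_{3})$ is the five non-identity elements. Inspecting the generated subgroups, any transposition together with any other non-identity element generates all of $S_{3}$ (non-nilpotent), whereas the two $3$-cycles generate the cyclic group $\langle(123)\rangle$. Hence $\Gamma_\frakN(S_{3})$ is a single edge plus three isolated vertices; being acyclic, it is bipartite.

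For the converse the key observation is that whenever $N\le G$ is a nilpotent subgroup, the set $N\setminus\nil(G)$ is a clique of $\Gamma_\frakN(G)$, since any two of its elements generate a subgroup of $N$, which is again nilpotent. A bipartite graph is triangle-free, so $\omega(\Gamma_\frakN(G))\le 2$, whence every nilpotent subgroup satisfies $|N|-|N\cap\nil(G)|\le 2$. Using $\nil(G)=Z^\ast(G)$ (valid for finite $G$), I would apply this to a Sylow $p$-subgroup $P$: writing $Z_p=P\cap Z^\ast(G)=O_p(Z^\ast(G))$, the bound $|P|\le|Z_p|+2$ gives $|PZ^\ast(G)/Z^\ast(G)|=|P|/|Z_p|\le 1+2/|Z_p|$. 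Since this quotient order is a power of $p$, a short case analysis forces the Sylow $p$-subgroup of $\bar G:=G/Z^\ast(G)$ to have order at most $2$ for $p=2$, at most $3$ for $p=3$, and to be trivial for $p\ge 5$; hence $|\bar G|$ divides $6$. As $\bar G$ has trivial center (because $Z^\ast(G)$ is the full hypercenter) and is non-nilpotent (otherwise $G$ would be nilpotent by Theorem \ref{ClementNil}), the only possibility is $\bar G\cong S_{3}$.

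It then remains to prove $Z^\ast(G)=1$. I would take the preimage $C^\ast\le G$ of the order-$3$ subgroup of $\bar G\cong S_{3}$, so that $C^\ast/Z^\ast(G)\cong\Z/3\Z$. Granting that $C^\ast$ is nilpotent, the clique bound yields $2|Z^\ast(G)|=|C^\ast|-|Z^\ast(G)|\le 2$, forcing $Z^\ast(G)=1$ and therefore $G\cong\bar G\cong S_{3}$, completing the proof.

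The main obstacle I anticipate is justifying that the preimage $C^\ast$ really is nilpotent: it is an extension of the nilpotent group $Z^\ast(G)$ by a cyclic group of order $3$, and such extensions need not be nilpotent in general (indeed $S_{3}$ is one that is not). One must therefore verify that the hypercentral elements of $G$ remain hypercentral in $C^\ast$, i.e. that $Z^\ast(G)\le Z^\ast(C^\ast)$, so that $Z^\ast(G)\le\zeta_i(C^\ast)$ for some $i$ and Theorem \ref{ClementNil} becomes applicable. Establishing the inclusion $\zeta_i(G)\cap H\le\zeta_i(H)$ for subgroups $H$ (a routine induction on $i$ starting from $Z(G)\cap H\le Z(H)$), together with the careful bookkeeping in the Sylow-quotient counting, is where the genuine care is required.
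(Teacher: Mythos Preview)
Your proof is correct and takes a genuinely different route from the paper. The paper disposes of the hard direction (bipartite $\Rightarrow G\cong S_3$) in one line by invoking \cite[Proposition~3.2]{Das2}, which asserts that $\Gamma_\frakN(G)$ contains a triangle whenever $G\not\cong S_3$. You instead give a self-contained argument: from the clique bound $|N\setminus Z^\ast(G)|\le 2$ for every nilpotent $N\le G$ (this is Lemma~\ref{nilclique} in the paper), the Sylow case analysis forces $|G/Z^\ast(G)|\mid 6$, hence $G/Z^\ast(G)\cong S_3$; then the nilpotency of the preimage $C^\ast$ of the $3$-cycle subgroup (obtained via $\zeta_i(G)\le\zeta_i(C^\ast)$ and Theorem~\ref{ClementNil}) gives a clique of size $2|Z^\ast(G)|$, forcing $|Z^\ast(G)|=1$. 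Your approach is longer but entirely elementary, effectively reproving the result the paper cites; the paper's approach is a one-liner but relies on that external black box. The step you flag as the main obstacle, the inclusion $\zeta_i(G)\cap H\le\zeta_i(H)$ for subgroups $H$, is indeed routine (your inductive sketch is correct), and since $Z^\ast(G)\le C^\ast$ it specializes here to $\zeta_i(G)\le\zeta_i(C^\ast)$, exactly what Theorem~\ref{ClementNil} needs.
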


\begin{proof} 
If $\Gamma_\frakN(G)$ is bipartite, then it has no $3$-cycles. Then, by \cite[Proposition 3.2]{Das2}, $G \cong S_{3}$.

Reciprocally, if $G \cong S_{3}$, note that the unique edge in $\Gamma_\frakN(G)$ is incident with  $(123)$ and $(132)$, see Example \ref{Sym3}. Then, the assertion holds.
\end{proof}

The following code in $\textit{SageMath}$ \cite{SageMath} is used to generate the nilpotent graph of a finite group $G$:

 \begin{algorithm}[H]
\KwData{$G$ = Group() \# Any group supported by SageMath.}
\KwResult{The nilpotent graph of $G$}
\begin{verbatim}
    G = Group() # Any group supported by SageMath can go here.
    nil_graph = Graph()
    PAIRS = []
    elements = G.list()
    nilpotentizer = []
    for element1 in elements:
        is_in_nilpotentizer = True
        temp = []
        for element2 in elements:
            if element1 != element2:
                H = G.subgroup([element1, element2])
                if H.is_nilpotent():
                    temp.append((element1, element2))
    
                else:
                    is_in_nilpotentizer = False
    
        if not is_in_nilpotentizer:
            for t in temp:
                PAIRS.append(t)
        elif is_in_nilpotentizer:
            nilpotentizer.append(element1)
    for p in PAIRS:
        if p[0] not in nilpotentizer and p[1] not in nilpotentizer:
            nil_graph.add_edge(p)
    
    P = nil_graph.plot(vertex_labels=False)
    P.show(figsize=(8, 8))
\end{verbatim}

\caption{An algorithm to generate the nilpotent graph of a finite group $G$ after removing its isolated points}
\end{algorithm}

\begin{example}[The nilpotent graph of the symmetric group of degree three]    \label{Sym3}
$\Gamma_\frakN(G)$ has four connected components, three of which are isolated points.

\begin{center}
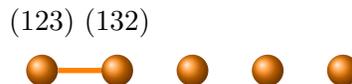
\begin{figure}[H]
\begin{tikzpicture}
\SetVertexMath
\SetGraphUnit{1}
\GraphInit[vstyle=Art]
\renewcommand*{\EdgeLineWidth}{2pt}

\Vertex[x=0, y=1, L=i_1]{i1}
\Vertex[x=1, y=1, L=i_2]{i2}

\Vertex[x=2, y=1, L=i_3]{i3}
\Vertex[x=3, y=1, L=i_4]{i4}
\Vertex[x=4, y=1, L=i_4]{i5}
  
\Edges(i1, i2)
 
\node[above] at (0,1.3) {$(123)$};
  \node[above] at (1,1.3) {$(132)$}; 
\end{tikzpicture}
\caption{The nilpotent graph of $S_{3}$}
\end{figure}
\end{center}
\end{example}

\begin{example}[The nilpotent graph of the symmetric group of degree four]\label{nils4}
$\Gamma_\frakN(S_{4})$ has five connected components.
\begin{center}
\begin{figure}[H]
\begin{tikzpicture}
\SetVertexMath
\SetGraphUnit{2}
\GraphInit[vstyle=Art]
\renewcommand*{\EdgeLineWidth}{1.2pt}
\Vertex[x=4, y=0]{1} 
\Vertex[x=3.65, y=1.63]{2} 
\Vertex[x=2.68, y=2.97]{3} 
\Vertex[x=1.24, y=3.8]{4} 
\Vertex[x=-0.42, y=3.98]{5} 
\Vertex[x=-2, y=3.46]{6} 
\Vertex[x=-3.24, y=2.35]{7} 
\Vertex[x=-3.91, y=0.83]{8} 
\Vertex[x=-3.91, y=-0.83]{9} 
\Vertex[x=-3.24, y=-2.35]{10} 
\Vertex[x=-2, y=-3.46]{11} 
 \Vertex[x=-0.42, y=-3.98]{12} 
\Vertex[x=1.24, y=-3.8]{13} 
\Vertex[x=2.68, y=-2.97]{14} 
\Vertex[x=3.65, y=-1.63]{15} 

\Vertex[x=6, y=-3]{16}
\Vertex[x=6, y=-2]{17}

\Vertex[x=6, y=1]{18}
\Vertex[x=6, y=2]{19}

\Vertex[x=7, y=-3]{20}
\Vertex[x=7, y=-2]{21}

\Vertex[x=7, y=1]{22}
\Vertex[x=7, y=2]{23}

\Edges(4,2,3) 
\Edges(3,15,2,14,5) 
\Edges(5,12,2,13,5) 
\Edges(5,11,2,10,5) 
\Edges(5,8,2,9,5) 
\Edges(5,7,2,6,10,7) 

\Edges(3,1,2)
\Edges(4,6,1,5,2)
\Edges(4,15,1,4,3)

\Edges(11,12,6,3,5,4)
\Edges(11,14,6,15,5,6,9)
\Edges(11,6,13,11)
\Edges(8,9,7,6,8,7)
\Edges(8,10,9)
\Edges(14,12,13,14)
\Edges(16,17)
\Edges(18,19)
\Edges(20,21)
\Edges(22,23)

\node[right] at (4.2,0) {$(12)$};
\node[right] at (3.8,1.63) {$(12)(34)$};
\node[right] at (2.8,2.97) {$(1423)$};
\node[right] at (1.4,3.9) {$(34)$};
\node[above] at (-0.42,4.1) {$(13)(24)$};
\node[above] at (-2.2,3.6) {$(14)(23)$};
\node[left] at (-3.4,2.35) {$(1342)$};
\node[left] at (-4.1, 0.83) {$(23)$};
\node[left] at (-4.1,-0.83) {$(14)$};
\node[left] at (-3.4,-2.5) {$(1243)$};
\node[below] at (-2.2,-3.6) {$(1432)$};
\node[below] at (-0.42,-4.1) {$(13)$};
\node[below] at (1.4,-4) {$(24)$};
\node[right] at (2.8, -3) {$(1234)$};
\node[right] at (3.8,-1.63) {$(1324)$};

\node[below] at (6,-3.2) {$(134)$};
\node[above] at (6,-1.8) {$(143)$};

\node[below] at (6,0.8) {$(123)$};
\node[above] at (6,2.2) {$(132)$};

\node[below] at (7,-3.2) {$(142)$};
\node[above] at (7,-1.8) {$(124)$};

\node[below] at (7,0.8) {$(234)$};
\node[above] at (7,2.2) {$(243)$};
\end{tikzpicture}
	\caption{The nilpotent graph of $S_{4}$.}
\end{figure}
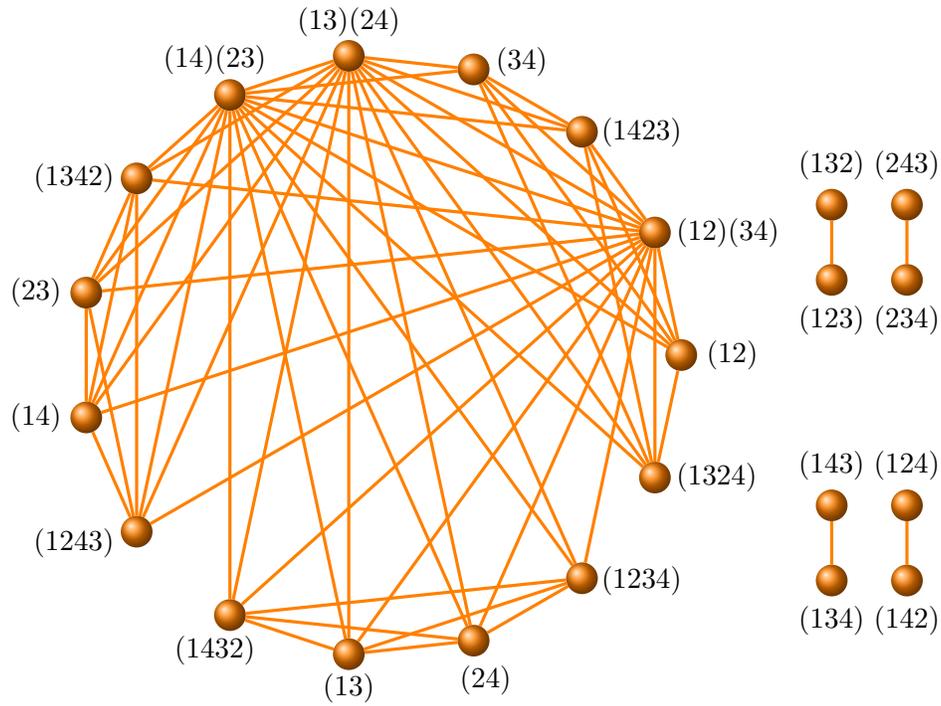
\end{center}
\end{example}



 





\begin{example}\label{nilD5}
The nilpotent graph of $D_5$ has $6$ connected components. 
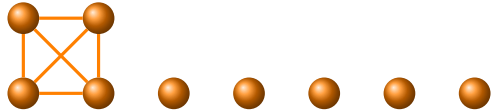
\begin{figure}[H]
\begin{center}
\begin{tikzpicture}
\SetVertexMath
\SetGraphUnit{2}
\GraphInit[vstyle=Art]
\renewcommand*{\EdgeLineWidth}{1.2pt}

\Vertex[x=3, y=1]{i1}
\Vertex[x=3, y=0]{i2}
\Vertex[x=4, y=1]{i3}
\Vertex[x=4, y=0]{i4}

\Vertex[x=5, y=0]{j1}
\Vertex[x=6, y=0]{j2}
\Vertex[x=7, y=0]{j3}
\Vertex[x=8, y=0]{j4}
\Vertex[x=9, y=0]{j5}
 
\Edges(i1, i2, i3, i4, i1, i3)
\Edges(i2, i4)
\end{tikzpicture}
\end{center}
 \caption{The nilpotent graph of $D_{5}$}
 \label{D5}
\end{figure}

\end{example}

\begin{example}\label{nilD12}
The nilpotent graph of $D_{12}$ has $4$ connected components.
\begin{center}
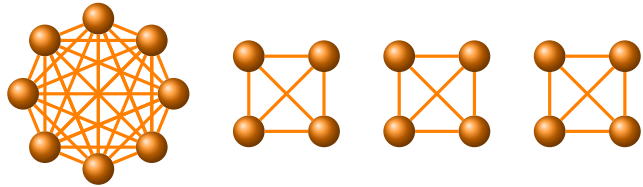
\begin{figure}[H]
\begin{tikzpicture}
\SetVertexMath
\SetGraphUnit{2}
\GraphInit[vstyle=Art]
\renewcommand*{\EdgeLineWidth}{1.2pt}

\Vertex[x=1, y=0, L=k1]{k1}
 \Vertex[x=0.71, y=0.71, L=k2]{k2}
 \Vertex[x=0, y=1, L=k3]{k3}
 \Vertex[x=-0.71, y=0.71, L=k4]{k4}
 \Vertex[x=-1, y=0, L=k5]{k5}
 \Vertex[x=-0.71, y=-0.71, L=k6]{k6}
 \Vertex[x=0, y=-1, L=k7]{k7}
 \Vertex[x=0.71, y=-0.71, L=k8]{k8}

\Vertex[x=2, y=0.5, L=i1]{i1}
\Vertex[x=2, y=-0.5, L=i2]{i2}
\Vertex[x=3, y=0.5, L=i3]{i3}
\Vertex[x=3, y=-0.5, L=i4]{i4}

\Vertex[x=4, y=0.5, L=i5]{i5}
\Vertex[x=4, y=-0.5, L=i6]{i6}
\Vertex[x=5, y=0.5, L=i7]{i7}
\Vertex[x=5, y=-0.5, L=i8]{i8}
 
\Vertex[x=6, y=0.5, L=j1]{j1}
\Vertex[x=6, y=-0.5, L=j2]{j2}
\Vertex[x=7, y=0.5, L=j3]{j3}
\Vertex[x=7, y=-0.5, L=j4]{j4}

\Edges(i1, i2, i3, i4, i1, i3)
\Edges(i2, i4)

\Edges(i5, i6, i7, i8, i5, i7)
\Edges(i6, i8)

\Edges(j1, j2, j3, j4, j1, j3)
\Edges(j2, j4)
 
 \Edges(k2,k7,k1,k2,k3,k4,k5,k6,k7,k8,k1,k4,k2,k5,k3)
 \Edges(k7,k5,k1,k3,k6,k4,k8,k5)
 \Edges(k1,k6,k2,k8,k3,k7,k4)
 \Edges(k1,k6,k2,k8,k3,k7,k4)
 \Edges(k6,k8)
\end{tikzpicture}
 \caption{The nilpotent graph of $D_{12}$}
 \label{D12}
\end{figure}
\end{center}
\end{example}

\begin{example}[The nilpotent graph of the projective special linear group of degree two over $\F_3$]
$\Gamma_\frakN(G)$ has five connected components.

\begin{center}
\begin{figure}[H]
\begin{tikzpicture}
\SetVertexMath
\SetGraphUnit{1}
\GraphInit[vstyle=Art]
\renewcommand*{\EdgeLineWidth}{2pt}

\Vertex[x=2, y=0, L=i_1]{i1}
\Vertex[x=2, y=1, L=i_2]{i2}

\Vertex[x=3, y=0, L=i_3]{i3}
\Vertex[x=3, y=1, L=i_4]{i4}
 
\Vertex[x=4, y=0, L=j_1]{j1}
\Vertex[x=4, y=1, L=j_2]{j2}

\Vertex[x=5, y=1, L=j_3]{j3}
\Vertex[x=5, y=0, L=j_4]{j4}

\Vertex[x=6, y=0, L=j_3]{k1}
\Vertex[x=6, y=1, L=j_4]{k2}
\Vertex[x=7, y=0.5, L=j_4]{k3}

\Edges(i1, i2)
\Edges(i3, i4)
\Edges(j1, j2)
\Edges(j3, j4)
\Edges(k1, k2,k3,k1)
\end{tikzpicture}
\caption{The nilpotent graph of $\psl(2,3)$}
     \label{PSL(2,3)}
\end{figure}
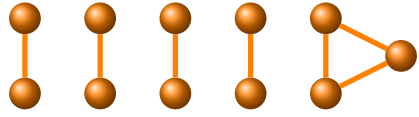
\end{center}
\end{example}

\begin{remark}
Contrary to what has been proven for the soluble graph of a non-soluble finite group (see for example \cite{Lucchini3}, \cite{Bhowal}), the nilpotent graph is generally not connected.  
\end{remark}

Lemmas \ref{grado} and \ref{star} follow similar arguments to the ones given in \cite[Lemma 2.1 and Proposition 2.2]{Bhowal}.

\begin{lemma}\label{grado}
For every $x\in G - \nil(G)$
\[
  \deg(x) = |\nil_G(x)| - |\nil(G)| - 1.  
\]
\end{lemma}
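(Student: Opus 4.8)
The plan is to unwind the definition of degree in $\Gamma_\frakN(G)$ and match it against the definition of the nilpotentizer $\nil_G(x)$, reducing everything to a counting argument. Recall that the vertex set is $G-\nil(G)$ and that $y$ is a neighbour of $x$ precisely when $y\neq x$, $y\in G-\nil(G)$, and $\langle x,y\rangle$ is nilpotent. Since the last condition is exactly the membership condition for $\nil_G(x)$, I would first record the set-theoretic identity
\[
N_{\Gamma_\frakN(G)}(x) = \bigl(\nil_G(x)\cap(G-\nil(G))\bigr)\setminus\{x\},
\]
so that $\deg(x)$ is the cardinality of the right-hand side.

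Next I would simplify the intersection. The key observation is that $\nil(G)\subseteq \nil_G(x)$: indeed, if $g\in\nil(G)$ then $\langle g,h\rangle$ is nilpotent for \emph{every} $h\in G$, in particular for $h=x$, so $g\in\nil_G(x)$. Consequently
\[
\nil_G(x)\cap(G-\nil(G)) = \nil_G(x)-\nil(G),
\]
a set of size $|\nil_G(x)|-|\nil(G)|$ because $\nil(G)$ sits entirely inside $\nil_G(x)$.

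Finally I would account for the deletion of $x$ itself. Here I use that $\langle x\rangle=\langle x,x\rangle$ is cyclic, hence abelian, hence nilpotent, so $x\in\nil_G(x)$; and $x\notin\nil(G)$ by hypothesis, since $x\in G-\nil(G)$. Thus $x$ genuinely belongs to $\nil_G(x)-\nil(G)$, and removing it subtracts exactly one from the count, giving
\[
\deg(x) = |\nil_G(x)-\nil(G)| - 1 = |\nil_G(x)| - |\nil(G)| - 1,
\]
as claimed.

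There is no serious obstacle in this argument; it is a bookkeeping computation. The only points requiring care are the two containments that make the subtraction work cleanly: the inclusion $\nil(G)\subseteq\nil_G(x)$ (so that $|\nil_G(x)-\nil(G)|=|\nil_G(x)|-|\nil(G)|$ rather than something involving the size of the intersection) and the fact that $x$ is counted in $\nil_G(x)$ but not in $\nil(G)$ (so that precisely one element is removed). Both follow immediately from the definitions and the nilpotency of cyclic groups, so the statement drops out without any appeal to the deeper structure theory.
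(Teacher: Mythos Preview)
Your proof is correct and follows essentially the same approach as the paper: both use the inclusion $\nil(G)\subseteq\nil_G(x)$ to identify the neighbourhood of $x$ with $\nil_G(x)-\nil(G)$ minus the vertex $x$ itself, and then count. Your write-up is in fact more careful than the paper's, since you explicitly verify that $x\in\nil_G(x)-\nil(G)$ (so that removing $x$ subtracts exactly one), a point the paper leaves implicit.
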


\begin{proof}
Since $\nil(G)\subseteq \nil_{G}(x)$ for all $x\in G$,  then $\nil_{G}(x)- \nil(G) \subseteq G - \nil(G)$. Thus if $x\in G - \nil(G)$, the number of vertices $h$ distinct form $x$ such that $\langle x,h\rangle$  is nilpotent is $|\nil_G(x)| - |\nil(G)|$ and so  $ \deg(x) = |\nil_G(x)| - |\nil(G)| - 1$. 
\end{proof}

\begin{lemma}\label{star}
The graph $\Gamma_\frakN(G)$ is not a star.
\end{lemma}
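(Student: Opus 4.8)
The plan is to argue by contradiction, exploiting the fact that the center of a star is a vertex adjacent to all the others, together with the degree formula established in Lemma \ref{grado}. Suppose $\Gamma_\frakN(G)$ were a star and let $c$ denote its center. Since $c$ is then adjacent to every remaining vertex, its degree equals the number of vertices minus one; that is,
\[
\deg(c) = |G - \nil(G)| - 1 = |G| - |\nil(G)| - 1 .
\]

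On the other hand, Lemma \ref{grado} gives $\deg(c) = |\nil_G(c)| - |\nil(G)| - 1$. Comparing the two expressions, I would cancel the common terms to obtain $|\nil_G(c)| = |G|$, and since $\nil_G(c) \subseteq G$ is a subset of a finite set this forces $\nil_G(c) = G$. By the definition of the nilpotentizer, $\nil_G(c) = G$ means that $\langle c, h\rangle$ is nilpotent for every $h \in G$, i.e. $c \in \nil(G)$. This contradicts the fact that $c$, being a vertex of $\Gamma_\frakN(G)$, lies in $G - \nil(G)$; hence $\Gamma_\frakN(G)$ cannot be a star.

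The only point requiring care is the passage from ``$c$ is adjacent to every other vertex'' to the equality $\nil_G(c) = G$, because the graph only records relations among the elements of $G - \nil(G)$. What makes the degree computation legitimate is that the missing elements are harmless: for $h \in \nil(G)$ the subgroup $\langle c, h\rangle$ is automatically nilpotent (as $h \in \nil(G) \subseteq \nil_G(c)$), and for $h = c$ the subgroup $\langle c \rangle$ is cyclic, hence nilpotent. Thus ``adjacent to all vertices'' already upgrades to ``$\nil_G(c) = G$'', which is exactly the content encoded by equating the two degree formulas. Beyond this bookkeeping there is no genuine obstacle, and the same argument in fact yields the slightly stronger statement that $\Gamma_\frakN(G)$ has no vertex joined to all the others.
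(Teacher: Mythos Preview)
Your argument is correct and follows essentially the same route as the paper: assume the center $c$ of the star has degree $|G|-|\nil(G)|-1$, compare with Lemma~\ref{grado} to force $|\nil_G(c)|=|G|$, and conclude $c\in\nil(G)$, a contradiction. Your additional paragraph justifying that elements of $\nil(G)$ and $c$ itself lie in $\nil_G(c)$ is a welcome clarification but not a departure from the paper's method.
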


\begin{proof}
Suppose that $\Gamma_\frakN(G)$ is a star. Then there exists $x\in G- \nil(G)$ such that $\deg(x) = |G- \nil(G)|-1=  |G|-|\nil(G)|-1$. Thus, by Lemma \ref{grado},  $|\nil_G(x)| = |G|$ so that $x\in \nil(G)$, which is a contradiction.
\end{proof}

%


\section{Strongly self-centralizing subgroups and nilpotent graphs}
\label{Section3}

This section provides several lower bounds on the clique number and the number of connected components of the nilpotent graph of a finite group. In addition, it is proved that the nilpotent graph of $S_{n}$ (for $n\geq 5$) is disconnected if and only if $n$ or $n-1$ is a prime number.\\

A subgroup $U$ of a finite group $G$ is said to be strongly self-centralizing if $C_G(x) = U$ for all $1\neq x\in U$.\\

Form now on, $G$ denotes a finite non-nilpotent group in the remainder of this work, unless stated otherwise.

\begin{lemma} \label{nilclique}
 Let $U$ be a nilpotent subgroup $G$ such that $U\nsubseteq \nil(G)$. Then  $U- (U\cap \nil(G))$ is the set of vertices of a clique of $\Gamma_\frakN(G)$. Moreover if $N$ is a normal nilpotent subgroup of $G$, $|N-(N\cap \nil(G))|\leq \omega(\Gamma_\frakN(G))$. In particular,  $|\mathrm{F}(G)-(\mathrm{F}(G)\cap \nil(G))|\leq \omega(\Gamma_\frakN(G))$. 
\end{lemma}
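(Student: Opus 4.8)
The plan is to prove the three assertions in sequence, the first being the crux and the others following as easy consequences. The key group-theoretic input throughout is the elementary fact that every subgroup of a nilpotent group is itself nilpotent.

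First I would establish that $U-(U\cap\nil(G))$ is a clique. Since $U\nsubseteq\nil(G)$, this set is non-empty, and as every one of its elements lies in $U$ but not in $\nil(G)$, it is contained in the vertex set $G-\nil(G)$ of $\Gamma_\frakN(G)$. The heart of the argument is then to check that any two distinct such elements are adjacent: given $g,h\in U-(U\cap\nil(G))$, the subgroup $\langle g,h\rangle$ is contained in $U$, and since $U$ is nilpotent and subgroups of nilpotent groups are nilpotent, $\langle g,h\rangle$ is nilpotent. By the definition of the nilpotent graph this means $g$ and $h$ are adjacent, so the set induces a complete subgraph, that is, a clique.

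Next I would deduce the bound for a normal nilpotent subgroup $N$, splitting into two cases. If $N\subseteq\nil(G)$, then $N-(N\cap\nil(G))=\emptyset$ and the inequality $0\leq\omega(\Gamma_\frakN(G))$ holds trivially. Otherwise $N\nsubseteq\nil(G)$, and applying the first part with $U=N$ shows that $N-(N\cap\nil(G))$ is the vertex set of a clique; since $\omega(\Gamma_\frakN(G))$ is by definition the largest size of a clique, this yields $|N-(N\cap\nil(G))|\leq\omega(\Gamma_\frakN(G))$. I would remark that normality of $N$ is not actually used in this estimate; it is recorded because the intended application is to the normal subgroup $\mathrm{F}(G)$.

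Finally, the ``in particular'' clause follows by specialization: as recalled in the preliminaries, $\mathrm{F}(G)$ is the largest normal nilpotent subgroup of $G$, hence is normal and nilpotent, so taking $N=\mathrm{F}(G)$ in the previous step gives $|\mathrm{F}(G)-(\mathrm{F}(G)\cap\nil(G))|\leq\omega(\Gamma_\frakN(G))$. I do not anticipate a genuine obstacle here; the only point demanding care is the empty-set case, so that the bound is correctly asserted even when the subgroup happens to lie inside $\nil(G)$, while everything else rests on the standard hereditary property of nilpotent groups.
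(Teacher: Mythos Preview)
Your proof is correct and follows essentially the same approach as the paper: both arguments rest on the hereditary property of nilpotency to show that $U-(U\cap\nil(G))$ induces a clique, then bound $\omega(\Gamma_\frakN(G))$ from below and specialize to $\mathrm{F}(G)$. The only minor differences are cosmetic---you handle the vacuous case $N\subseteq\nil(G)$ explicitly and observe that normality is not actually used, while the paper routes the inequality through a short chain of maxima over normal and then arbitrary nilpotent subgroups.
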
  

\begin{proof}
Since any subgroup of a nilpotent group is nilpotent, $\langle x,y\rangle$ is nilpotent for all $x,y\in U-(U\cap \nil(G))$. Thus the subgraph of $\Gamma_\frakN(G)$  having $U- (U\cap \nil(G))$ as its set of vertices is a clique. Moreover, if $N$ is a normal nilpotent subgroup of $G$, then   
\begin{eqnarray*}
|N-(N\cap \nil(G))|  &\leq&\max\{|U- (U\cap \nil(G))|: U\trianglelefteq G \wedge U \in \frakN \}\\
                 &\leq& \max\{|U- (U\cap \nil(G))|: U\leq G \wedge U \in \frakN \}\\
                 &\leq& \omega(\Gamma_\frakN(G)).
\end{eqnarray*}
The rest follows from the fact that the fitting subgroup $\mathrm{F}(G)$ is the largest normal nilpotent subgroup of $G$.
\end{proof}

\begin{theorem}
\label{ProperCentralizer2}
 If there exists a strongly self-centralizing subgroup $U$ of $G$, then:

 \begin{enumerate}
     \item $\nil_G(x) = U$ for all $1\neq x\in U$.\\
     
     \item $\Gamma_\frakN(G)$ is a disconnected graph.\\

     \item  $|\mathrm{F}(G)|-1\leq \omega(\Gamma_\frakN(G))$.
 \end{enumerate}
\end{theorem}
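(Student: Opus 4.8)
The plan is to prove the three items in order, with item (1) carrying the main weight and items (2) and (3) following from it once one observes that the hypothesis forces $\nil(G)=\{1\}$. Throughout I assume $U\neq 1$, which is the only interesting case (the trivial subgroup is strongly self-centralizing only vacuously, and item (2) in any event requires $U\neq 1$).

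First I would record two cheap consequences of the hypothesis. Because $C_G(x)=U$ for every $1\neq x\in U$, any two nonidentity elements of $U$ centralize one another, so $U$ is abelian; being abelian it is nilpotent, whence every $\langle x,y\rangle$ with $x,y\in U$ is nilpotent and $U\subseteq\nil_G(x)$ for all $x\in U$. This already gives one inclusion of item (1).

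The reverse inclusion $\nil_G(x)\subseteq U$ is the crux. Fix $1\neq x\in U$ and $g\in\nil_G(x)$, so that $H=\langle x,g\rangle$ is nilpotent; it suffices to show that $g$ commutes with $x$, for then $g\in C_G(x)=U$. By Theorem \ref{NilpotentEquivalence}, $H$ is the direct product of its Sylow subgroups $H_q$, and writing $x=\prod_q x_q$, $g=\prod_q g_q$ for the corresponding $q$-parts, $g$ commutes with $x$ as soon as $g_q$ commutes with $x_q$ for every prime $q$ (elements lying in distinct Sylow subgroups automatically commute). When $x_q=1$ this is automatic, so fix a prime $q$ with $x_q\neq 1$; then $x_q\in U\setminus\{1\}$ and $C_G(x_q)=U$. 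The key manoeuvre is to route through the centre of the nontrivial $q$-group $H_q$: choosing $1\neq w\in Z(H_q)$, the element $w$ centralizes $x_q$, hence $w\in C_G(x_q)=U$ and so $C_G(w)=U$; but $w$ also centralizes $g_q$, giving $g_q\in C_G(w)=U$. Since $x_q\in U$ as well and $U$ is abelian, $g_q$ and $x_q$ commute, as required. I expect this centre-of-Sylow step to be the main obstacle, as it is precisely where the strong self-centralizing hypothesis is used --- twice, first to place $w$ inside $U$ and then to pull $g_q$ into $U$.

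Before items (2) and (3) I would prove that $\nil(G)=\{1\}$. If there were a nontrivial central element $z\in Z(G)$, then $z$ would centralize a fixed $1\neq x\in U$, forcing $z\in C_G(x)=U$ and hence $C_G(z)=U$; but $z$ is central, so $C_G(z)=G$, giving $G=U$, an abelian and therefore nilpotent group, contrary to hypothesis. Thus $Z(G)=1$, and then the whole upper central series collapses, so $\nil(G)=Z^\ast(G)=\{1\}$. For item (2), part (1) together with the definition of $\Gamma_\frakN(G)$ shows that every neighbour of a vertex $1\neq x\in U$ lies in $\nil_G(x)=U$; since $\nil(G)=\{1\}$, the set $U-\{1\}$ is thus closed under adjacency and is a union of connected components (indeed a single clique-component). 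As $U$ is abelian while $G$ is not nilpotent, $U\neq G$, so $G-U$ contains vertices separated from $U-\{1\}$, and $\Gamma_\frakN(G)$ is disconnected. Finally, for item (3), $\nil(G)=\{1\}$ yields $\mathrm{F}(G)\cap\nil(G)=\{1\}$, so Lemma \ref{nilclique} gives $|\mathrm{F}(G)|-1=|\mathrm{F}(G)-(\mathrm{F}(G)\cap\nil(G))|\leq\omega(\Gamma_\frakN(G))$.
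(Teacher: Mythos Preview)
Your proof is correct, and parts (2) and (3) match the paper's essentially word for word. The only real difference is in part (1): you decompose $H=\langle x,g\rangle$ into its Sylow subgroups and, for each prime $q$ with $x_q\neq 1$, pick a nontrivial $w\in Z(H_q)$ to bounce $g_q$ into $U$ via the double use of the strongly self-centralizing hypothesis. The paper does the same bounce in a single stroke at the level of $H$ itself: since $H$ is nilpotent and nontrivial, there is $1\neq z\in Z(H)$; then $z\in C_G(x)=U$, whence $C_G(z)=U$, and since $z$ also centralizes $g$ one gets $g\in U$ directly. Your Sylow detour is sound (and you are right that $x_q$ is a power of $x$, hence lies in $U$), but it adds a layer that the simpler global-centre argument avoids; both approaches exploit exactly the same two-step use of the hypothesis.
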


\begin{proof}

\begin{enumerate}
    \item Suppose a strongly self-centralizing subgroup $U$ of $G$ exists. Note that $Z(G) \leq C_G(x) = U$, for all $1\neq x\in U$. Since $U$ is abelian, then there is $y \in G- U$ such that $U\cap C_G(y) = 1$; otherwise it would exist $1 \neq z \in U\cap C_G(y)$ and therefore $y \in C_G(z) = U$, which is a contradiction. This implies that $Z(G) \leq U \cap C_G(y) = 1$, and so $Z^\ast(G) = 1$.\\
    
    Let $1\neq x\in U$. If $g \in \nil_G(x)$, then there exists $1\neq z \in Z(\langle x, g \rangle)$, and so $z \in C_G(x) = U$, implying that  $g \in C_G(z) = U$. Thus $\nil_G(x) \subseteq U$ and $U=C_{G}(x)\subseteq nil_{G}(x)$, i.e.,  $\nil_G(x) = U$.\\
    
    \item By part $1$, for every $h \in G - U \subseteq G - Z^\ast(G)=G- 1$ and $x\in U$, $\langle x, h \rangle$ is not nilpotent, i.e., there is no edge from $h$ to $x$, so $\Gamma_\frakN(G)$ must be a disconnected graph.\\

    \item Since $G$ is finite, $\nil(G)=Z^\ast(G)$. Thus, by the proof of part $1$, $nil(G)=Z^\ast(G) = 1$. Therefore $|\mathrm{F}(G)|-1\leq \omega(\Gamma_\frakN(G))$ (by Lemma \ref{nilclique}).
\end{enumerate}
 
\end{proof}



\begin{theorem}\label{str}
Let $\mathcal{U}\neq \emptyset $ be the set of the strongly self-centralizing subgroups of $G$. If $U\in \mathcal{U}$, then $U-1$ is the set of vertices of a connected component of $\Gamma_\frakN(G)$ and 

\[\kappa(\Gamma_\frakN(G))\geq \begin{cases} |\mathcal{U}| \mbox{ if } \bigcup_{U\in \mathcal{U}}U= G\\
                                             |\mathcal{U}|+1 \mbox{ if } \bigcup_{U\in \mathcal{U}}U\neq G.  
    
\end{cases}\]

\end{theorem}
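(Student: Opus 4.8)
The plan is to leverage Theorem~\ref{ProperCentralizer2} and Lemma~\ref{nilclique}: first to identify the vertex set, then to show each $U-1$ is a full connected component, and finally to count. Since $\mathcal{U}\neq\emptyset$, I would pick any $U_{0}\in\mathcal{U}$; the argument in part~(1) of Theorem~\ref{ProperCentralizer2} applied to $U_{0}$ gives $Z^{\ast}(G)=1$, and because $G$ is finite, $\nil(G)=Z^{\ast}(G)=1$. Hence the vertex set of $\Gamma_{\frakN}(G)$ is $G-1$, and for every $U\in\mathcal{U}$ the set $U-1$ coincides with $U-(U\cap\nil(G))$.

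Next I would fix $U\in\mathcal{U}$ and record that $U$ is abelian: any two nonidentity elements of $U$ centralize each other, since each lies in the centralizer $U$ of the other. Thus $U$ is nilpotent, and Lemma~\ref{nilclique} shows that $U-1=U-(U\cap\nil(G))$ induces a clique, so in particular it is connected. The crux is to upgrade this clique to a full connected component, i.e.\ to show that no edge leaves $U-1$. For this I take $1\neq x\in U$ and any vertex $h$ adjacent to $x$; then $\langle x,h\rangle$ is nilpotent, so $h\in\nil_{G}(x)$. By part~(1) of Theorem~\ref{ProperCentralizer2}, $\nil_{G}(x)=U$, whence $h\in U$ and, as $h\neq 1$, $h\in U-1$. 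Therefore every neighbour of a vertex of $U-1$ again lies in $U-1$, and together with connectedness this forces $U-1$ to be exactly one connected component of $\Gamma_{\frakN}(G)$.

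To count, I would first show that distinct members of $\mathcal{U}$ yield disjoint components. If $U_{1},U_{2}\in\mathcal{U}$ share a nonidentity element $z$, then $C_{G}(z)=U_{1}$ and $C_{G}(z)=U_{2}$, forcing $U_{1}=U_{2}$; hence $U_{1}\cap U_{2}=1$ whenever $U_{1}\neq U_{2}$, and the sets $U-1$ for $U\in\mathcal{U}$ are pairwise disjoint. This produces $|\mathcal{U}|$ distinct connected components, so $\kappa(\Gamma_{\frakN}(G))\geq|\mathcal{U}|$. These components cover exactly $\bigl(\bigcup_{U\in\mathcal{U}}U\bigr)-1$ among the vertices $G-1$. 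If $\bigcup_{U\in\mathcal{U}}U\neq G$, I would pick $g\in G-\bigcup_{U\in\mathcal{U}}U$; then $g\neq 1$ is a vertex belonging to none of the components $U-1$, so the component containing $g$ is an additional one, giving $\kappa(\Gamma_{\frakN}(G))\geq|\mathcal{U}|+1$.

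The only delicate step is establishing maximality of $U-1$ as a component (that it has no outgoing edges); this is exactly where the strongly self-centralizing hypothesis enters, through the identity $\nil_{G}(x)=U$ supplied by Theorem~\ref{ProperCentralizer2}. The remaining steps — abelianness of $U$, disjointness of distinct members of $\mathcal{U}$, and the final dichotomy — are routine bookkeeping.
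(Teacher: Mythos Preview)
Your proposal is correct and follows essentially the same route as the paper: both reduce to $\nil(G)=1$ via Theorem~\ref{ProperCentralizer2}, use $\nil_{G}(x)=U$ to prevent edges leaving $U-1$, invoke Lemma~\ref{nilclique} for connectedness of $U-1$, and then count. Your version is in fact slightly more explicit than the paper's, since you spell out why $U$ is abelian and why distinct members of $\mathcal{U}$ give distinct components (via $C_{G}(z)=U_{1}=U_{2}$), points the paper leaves implicit.
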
 

\begin{proof}
Suppose $\mathcal{U}\neq \emptyset$. Then $\nil(G)=Z^{*}(G)=1$ (by the proof of Theorem \ref{ProperCentralizer2}, part $1$) and so the set of vertices of $\Gamma_\frakN(G)$ is $G-1$. Let $U\in \mathcal{U}$, then  $\nil_G(x) = U$ for all $1\neq x\in U$ (by Theorem \ref{ProperCentralizer2}, part $1$), that is, any vertex in $G-1$ that is adjacent to a vertex in $U-1$ must be in $U-1$. Note that if $C$ is a connected component of $\Gamma_\frakN(G)$ containing a vertex $x \in U-1$ and $y\in C$, then there exists a path from $x$ to $y$, so the vertex of that path that is connected to $x$ must be in $U-1$. Any other vertex connected to that one must be in $U-1$ as well. By applying this reasoning consecutively, one gets that $y\in U-1$. Thus, since  $U-1$ is the set of vertices of a connected subgraph of $\Gamma_\frakN(G)$ (by Lemma \ref{nilclique}), such subgraph must be a connected component of $\Gamma_\frakN(G)$. 

 If $\bigcup_{U\in \mathcal{U}}U\subsetneq G$, then any element $ h\in G- \bigcup_{U\in \mathcal{U}}U$ is a vertex $\Gamma_\frakN(G)$ that is not related to an element of $\bigcup_{U\in \mathcal{U}}(U-1)$ so that $h$ belongs to a connected component whose set of vertices has trivial intersection with $\bigcup_{U\in \mathcal{U}}(U-1)$, implying that $|\mathcal{U}|+1\leq \kappa(\Gamma_\frakN(G))$. Therefore

\[\kappa(\Gamma_\frakN(G))\geq \begin{cases} |\mathcal{U}| \mbox{ if } \bigcup_{U\in \mathcal{U}}U= G\\
                                             |\mathcal{U}|+1 \mbox{ if } \bigcup_{U\in \mathcal{U}}U\neq G  
    
\end{cases}\]
\end{proof}

\begin{example}\label{ncomp-nilS4}
Using SageMath \cite{SageMath}, one can see that the strongly self-centralizing subgroups of $S_{4}$ are
\[\langle (132) \rangle, \langle (142) \rangle, \langle (143) \rangle, \langle (243) \rangle.\]
Thus, if $\mathcal{U}$ is defined as in Theorem \ref{str}, $|\mathcal{U}|+1=4+1=5\leq \kappa(\Gamma_\frakN(G))=5$ where the last equality is by Example \ref{nils4}. \\

Similarly, one can see that the strongly self-centralizing subgroups of $D_{5}$ are
\[\langle (25)(34) \rangle,  \langle (12)(35) \rangle,  \langle (13)(45) \rangle,\langle (14)(23) \rangle,\langle (15)(24) \rangle, \langle (12345) \rangle.\]
Thus, if $\mathcal{U}$ is defined as in Theorem \ref{str}, $|\mathcal{U}|=6\leq \kappa(\Gamma_\frakN(G))=6$  where the last equality is by Example \ref{nilD5}.
\end{example}

\begin{example}
 From the shape of $\Gamma_\frakN(S_{4})$ and $\Gamma_\frakN(D_{5})$, that were computed in Examples \ref{nils4} and \ref{nilD5}, respectively, it can be seen that $\omega (\Gamma_\frakN(S_{4})\newline) =5$ and $\omega (\Gamma_\frakN(D_{5}))=4$. On the other hand, $ F(S_{4})=\{ 1, (12)(34), (14\newline )(23), (13)(24)\}$  and $ F(D_{5})=\{ 1, (12345), (13524), (14253), (15432)\}$. Thus, since $S_{4}$ and $D_{5}$ contain strongly self-centralizing subgroups (these were computed in Example \ref{ncomp-nilS4}), \[|F(S_{4})|-1=3\leq 5=\omega (\Gamma_\frakN(S_{4}))\]  and \[|F(D_{5})|-1=4\leq 4=\omega (\Gamma_\frakN(D_{5})).\] These inequalities illustrate Theorem \ref{ProperCentralizer2} (part $3$).
\end{example}




\begin{theorem}
Let $n\in \mathbb{Z}_{>0}$.

\begin{enumerate}
    \item If $n$ or $n-1$ is a prime number, then $S_{n}$ has self-centralizing cyclic subgroups. In particular, $\Gamma_\frakN(S_{n})$ is a disconnected graph.\\

    \item If $n \geq 5$ and $\Gamma_\frakN(S_{n})$ is disconnected, then $n$ or $n-1$ is a prime number.
\end{enumerate}

\end{theorem}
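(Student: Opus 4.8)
The plan is to prove both directions by connecting disconnectedness of $\Gamma_\frakN(S_n)$ to the existence of a strongly self-centralizing (cyclic) subgroup, exploiting the cycle structure of permutations. For part (1), if $n$ is prime, the subgroup generated by an $n$-cycle is self-centralizing of prime order, hence automatically strongly self-centralizing (every nonidentity element generates the whole subgroup). If $n-1$ is prime, an $(n-1)$-cycle $\sigma$ fixes exactly one point; I would argue that $C_{S_n}(\sigma)=\langle\sigma\rangle$ by examining which permutations commute with a single cycle, so $\langle\sigma\rangle$ is again strongly self-centralizing of prime order. In either case, Theorem \ref{ProperCentralizer2}(2) immediately gives that $\Gamma_\frakN(S_n)$ is disconnected.

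For part (2), the harder direction, I would argue contrapositively: assuming $n\geq 5$ and that neither $n$ nor $n-1$ is prime, I want to show $\Gamma_\frakN(S_n)$ is connected. The natural strategy is to exhibit, for every nonidentity $\sigma\in S_n$ (note $\nil(S_n)=Z^*(S_n)=1$ for $n\neq 2$, as stated in the excerpt, so all of $S_n-1$ consists of vertices), a path in $\Gamma_\frakN(S_n)$ joining $\sigma$ to some fixed canonical vertex, say a transposition $\tau_0=(12)$. Two permutations are adjacent precisely when they generate a nilpotent subgroup. A clean sufficient condition for adjacency is that they commute and generate a $p$-group, or more simply that they have disjoint supports (then they commute and generate an abelian, hence nilpotent, subgroup). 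So the key tool is: if $\supp(\sigma)\cap\supp(\rho)=\emptyset$ then $\sigma$ and $\rho$ are adjacent. Since $n\geq 5$ is not prime and $n-1$ is not prime, both admit enough room to build intermediate permutations with prescribed small supports.

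The concrete idea I would pursue is to show every vertex is connected to a transposition, and that all transpositions lie in one component. A transposition $(ab)$ can be linked to any other transposition $(cd)$ through a chain of transpositions with overlapping or disjoint supports chosen so that consecutive pairs generate a $2$-group (two transpositions generate a nilpotent—indeed a $2$-group—subgroup when they are equal, disjoint, giving the Klein four group, or share a point, giving $S_3$ which is \emph{not} nilpotent; so I must use only the disjoint case). Because $n\geq 5$, given $(ab)$ and $(cd)$ I can find a transposition disjoint from both, serving as a relay, so the transpositions form a connected subgraph. Then for a general $\sigma\neq 1$, I would find a transposition disjoint from $\supp(\sigma)$; this requires $|\supp(\sigma)|\leq n-2$. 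The genuinely delicate cases are the $\sigma$ with large support, namely $n$-cycles and $(n-1)$-cycles, for which no transposition avoids the support.

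The main obstacle, therefore, is handling the full-support elements precisely when $n$ and $n-1$ fail to be prime. Here the non-primality is exactly what saves the argument: if $n$ is composite, an $n$-cycle $\sigma$ has a nontrivial power $\sigma^d$ (for a proper divisor $d\mid n$) that is a product of several disjoint cycles; one can then route $\sigma$ through intermediate vertices supported on a proper subset. More carefully, I would use that a composite $n$ or $n-1$ forces the centralizer or a suitable power to break the support, letting me peel off a smaller-support permutation adjacent to $\sigma$ and reduce to the already-handled case. The crux to verify rigorously is that each such intermediate pair genuinely generates a nilpotent subgroup (not merely a solvable one), so I would lean on disjoint-support commutativity and on the structure of cyclic $p$-groups, checking that the constructed relays always generate abelian or $p$-group subgroups. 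I expect the bookkeeping for the $n$-cycle and $(n-1)$-cycle cases, and verifying nilpotency (rather than just connectivity of supports) at each step, to be where the real work lies.
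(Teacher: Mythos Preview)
Your proposal is correct and follows essentially the same strategy as the paper: for part (1), both compute that the centralizer of an $n$-cycle (resp.\ $(n-1)$-cycle) is the cyclic group it generates and then invoke Theorem~\ref{ProperCentralizer2}; for part (2), both connect all small-support permutations through chains of pairwise disjoint transpositions and then reduce $n$-cycles and $(n-1)$-cycles to this case by passing to a nontrivial power that splits into shorter disjoint cycles. The only cosmetic difference is organizational: you argue the contrapositive directly, using compositeness of both $n$ and $n-1$ to break the long cycles, whereas the paper assumes disconnectedness, splits on the parity of $n$, and uses evenness of one of $n$, $n-1$ to produce an involution power playing the same role.
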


\begin{proof}
\begin{enumerate}
    \item 
Suppose that $n$ or $n-1$ is a prime number. Let $G:=S_{n}$ and consider the action of $G$ on itself by conjugation. It is known that the conjugacy classes of any $r$-cycle in $G$ are determined by cycle type. Thus for $1 \leq r \leq n$, the number of $r$-cycles in $G$ is $\frac{n!}{r(n-r)!}$. If $\sigma$ is an $r$-cycle in $G$, then
\[
|G:C_G(\sigma)| = |\orb_G(\sigma)| = \frac{n!}{r(n-r)!}.
\]
Let  $\alpha$ and $\beta$ be an $n$-cycle and an $(n-1)$-cycle in $G$ respectively. Then  
\[
|G : C_G(\alpha)| = (n-1)! \ \ \text{and} \ \ |G : C_G(\beta)| = (n-2)!n. 
\]
Therefore $|C_G(\alpha)| = n$ y $|C_G(\beta)| = n-1$. It follows that $C_G(\alpha) = \langle \alpha \rangle$ and $C_G(\beta) = \langle \beta \rangle$. If $n$ is a prime number then $C_G(\alpha^k) = \langle \alpha \rangle$ for $1 \leq k < n-1$. Analogously, if $n-1$ is a prime number then $C_G(\beta^k) = \langle \beta \rangle$, for $1 \leq k < n$. Thus, by Theorem \ref{ProperCentralizer2} (part $2$), $\Gamma_\frakN(S_{n})$ is a disconnected graph.\\

\item Suppose that $\Gamma_\frakN(S_{n})$ is a disconnected graph, and let $\sigma$ be a $(n-1)$-cycle. If $n-1$ is an even number, then there exists an involution $\iota \in S_{n}$ such that $\sigma^m = \iota = (s_1 s_2)(s_3 s_4) \cdots(s_{l-1} s_l)$, for some integer number $m$. Let $\tau_1 := (s_1 s_2)$.

Let $\alpha \in S_{n}$ and its decomposition in disjoints cycles given by
\[
\alpha = (a_1\ldots a_{k_1})(a_{k_1+1}\ldots a_{k_2})\cdots (a_{k_{l-1}+1} \ldots a_{k_l}),   
\]
with $k_i \leq n-2$, $i \in \{1,\ldots, l\}$. Let $\theta := (a_1\ldots a_{k_1})$. Consider the following cases:\\

\noindent \textbf{Case 1:} $\supp(\theta) \cap \supp(\tau_1) = \emptyset$. Then $\tau_1\theta = \theta\tau_1$, and $\tau_1 \in \nil_G(\theta)$.\\

\noindent \textbf{Case 2:} $\{s_1, s_2\}\subseteq \supp(\theta)$. Since $n \geq 5$, there exists another transposition $\tau_2 =(s_1' s_2')$ such that $\theta(s_1')=s_1'$ and $\theta(s_2')=s_2'$. Therefore, $\tau_2\theta = \theta\tau_2$ y $\tau_1\tau_2 = \tau_2\tau_1$. Then $\theta\in \nil_G(\tau_2)$ and $\tau_2\in \nil_G(\tau_1)$.\\

\noindent \textbf{Case 3:} $s_1\in \supp(\theta)$, and $\theta(s_2)=s_2$. Let $\tau_3= (s_2 s)$, with $s\in \fix(\theta)$, which exists, since $n \geq 5$ and $\theta$ has length at most $n-2$. Then $\theta$ and $\tau_3$ are disjoints. Thus, there exists $\tau_4 = (b_1 b_2)$, where $b_1$ and $b_2$ are elements that $\tau_3$ and $\tau_1$ stabilize, since $n \geq 5$. Therefore, $\tau_3\theta = \theta\tau_3$, $\tau_4\tau_1 = \tau_1\tau_4$, and $\tau_3\tau_4 = \tau_4\tau_3$. 
It follows that $\theta \in \nil_G(\tau_3)$, $\tau_3\in \nil_G(\tau_4)$ y $\tau_4 \in \nil_G(\tau_1)$.\\

\noindent \textbf{Case 4:} $s_2\in \supp(\theta)$, and $\theta(s_1)=s_1$. Similar to the above case.

Since $\theta \in \nil_G(\alpha)$, $\tau_1 \in \nil_G(\sigma^m)$ and $\sigma^m \in \nil_G(\sigma)$, there is always a path between any $(n-1)$-cycles and permutations that are the product of disjoint cycles of length less than $n-1$, other than 1. Therefore, $\Gamma_\frakN(S_{n})$ has a connected component, say $\mathcal{C}$, containing any permutation which is a product of disjoint cycles of length less than or equal to $n-1$. Since $\Gamma_\frakN(S_{n})$ is disconnected, then there must exist at least one other connected component, which contains only $n$-cycles. 

Suppose $n$ is not a prime number. Any cyclic subgroup of $S_{n}$ generated by an $n$-cycle contains a different from 1 permutation, which is a product of disjoint cycles of length less than or equal to $n-1$ so that any $n$-cycle would be adjacent to vertices which are in the connected component $\mathcal{C}$. That is, any $n$-cycle is in $\mathcal{C}$, so all permutations other than the identity are in $\mathcal{C}$, which contradicts that $\Gamma_\frakN(S_{n})$ is disconnected. Then, $n$ must be a prime number. 

On the other hand, if $n-1$ is odd, then $n$ is even, so analogously, $n-1$ is prime.
\end{enumerate}

\end{proof}

\section{Directed product of groups and connectedness}
\label{Section4}

 In this section, necessary and sufficient conditions to guarantee the connectedness of the nilpotent graph (see Proposition \ref{ConnectedProduct} and Theorem \ref{KappaProductNilpotent}); and for two nilpotent graphs to be isomorphic (see Theorem \ref{GraphIsomorphism} and Corollary \ref{OrderGroups}), are provided. It is also proved that the nilpotent graph of any finite non-nilpotent group is never self-complementary (Theorem \ref{not-seft-comp}), among other results.


\begin{lemma}
\label{ComponentsNilAdjacency}
Let $G$ and $H$ be finite groups. Then $\langle (g_1, h_1), (g_2, h_2) \rangle$ is a nilpotent subgroup of $G\times H$ if and only if $\langle g_1, g_2 \rangle$ and $\langle h_1, h_2 \rangle$ are nilpotent subgroups of $G$ and $H$ respectively.
\end{lemma}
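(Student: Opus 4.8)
\emph{The plan} is to exploit the two canonical projections of the direct product together with the two standard facts that subgroups and homomorphic images of (finite) nilpotent groups are again nilpotent. First I would fix notation: write $A := \langle g_1, g_2 \rangle \leq G$ and $B := \langle h_1, h_2 \rangle \leq H$, and set $K := \langle (g_1,h_1),(g_2,h_2)\rangle \leq G \times H$. The first thing to record is the inclusion $K \leq A \times B$, since both generators of $K$ lie in $A \times B$ and hence so does the subgroup they generate. The second is that the coordinate projections $\pi_G : G \times H \to G$ and $\pi_H : G \times H \to H$ carry $K$ precisely onto $A$ and $B$: a homomorphism sends a generated subgroup onto the subgroup generated by the images of the generators, so $\pi_G(K) = \langle g_1, g_2 \rangle = A$ and $\pi_H(K) = \langle h_1, h_2 \rangle = B$.

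For the forward implication I would assume $K$ is nilpotent. Because $A$ and $B$ are homomorphic images of $K$ under $\pi_G$ and $\pi_H$, and a homomorphic image of a nilpotent group is nilpotent, both $A$ and $B$ are nilpotent, which is what is wanted.

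For the converse I would assume $A$ and $B$ are nilpotent and deduce that $A \times B$ is nilpotent: being a finite direct product of finite nilpotent groups, it is the direct product of its Sylow subgroups (each the product of the corresponding Sylow subgroups of $A$ and $B$) and hence nilpotent by Theorem \ref{NilpotentEquivalence}, part (6); alternatively one multiplies the upper central series coordinatewise. Since $K \leq A \times B$ and every subgroup of a nilpotent group is nilpotent, $K$ is nilpotent.

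The argument is essentially routine, and there is no serious obstacle; the only point requiring care is that $K$ is in general a proper (subdirect) subgroup of $A \times B$ rather than the full product. Consequently the converse should be obtained by passing to the overgroup $A \times B$ and invoking subgroup-closure of nilpotency, not by attempting to identify $K$ itself with a direct product.
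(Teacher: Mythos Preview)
Your proof is correct and follows essentially the same approach as the paper: for the forward direction the paper passes to the quotient $K/(K\cap(1\times H))\cong \langle g_1,g_2\rangle$, which is exactly your projection $\pi_G|_K$ viewed through the first isomorphism theorem, and for the converse both you and the paper embed $K$ in $A\times B$ and invoke subgroup-closure of nilpotency. Your added remark that $K$ may be a proper subdirect product of $A\times B$ is a nice clarification, but otherwise the arguments coincide.
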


\begin{proof}
Suppose $U := \langle (g_1, h_1), (g_2, h_2) \rangle$ is a nilpotent subgroup of $G\times H$, and $\Tilde{H} := 1\times H$. Then 
$U/(U\cap \Tilde{H}) \cong \langle g_1, g_2 \rangle$, and so $\langle g_1, g_2 \rangle$ is nilpotent. Similarly $\langle h_1, h_2 \rangle$ is nilpotent.

Reciprocally, suppose $\langle g_1, g_2 \rangle$ and $\langle h_1, h_2 \rangle$ are nilpotent groups. Then $\langle g_1, g_2 \rangle \times \langle h_1, h_2 \rangle$ is nilpotent. Since $\langle (g_1, h_1), (g_2, h_2) \rangle \leq \langle g_1, g_2 \rangle \times \langle h_1, h_2 \rangle$, then  $\langle (g_1, h_1), (g_2, h_2) \rangle$ is a nilpotent group.
\end{proof}

\begin{corollary}
\label{NilProduct}
Let $G$ and $H$ be groups. Then $\nil(G\times H) = \nil(G)\times \nil(H)$.
\end{corollary} 

\begin{proof}
\begin{align*}
(g, h) \in \nil(G) \times \nil(H) & \Leftrightarrow \langle g, x \rangle, \langle h, y \rangle \in \frakN \ \ \forall x \in G \ \ \forall y \in H\\
& \Leftrightarrow \langle (g, h), (x, y) \rangle \ \in\frakN \ \  \forall  (x, y) \in G \times H\\
& \Leftrightarrow (g, h) \in \nil(G \times H).
\end{align*}  
\end{proof}

\begin{proposition}
\label{ConnectedProduct}
Let $G$ and $H$ be non-nilpotent groups. Then $\Gamma_{\frakN}(G\times H)$ is connected.
\end{proposition}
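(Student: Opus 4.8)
The plan is to exhibit a single connected subgraph $S$ that every vertex is attached to, so that connectivity of the whole graph follows at once. By Corollary \ref{NilProduct} the vertex set of $\Gamma_{\frakN}(G\times H)$ is exactly the set of pairs $(g,h)$ with $g\notin\nil(G)$ or $h\notin\nil(H)$, and by Lemma \ref{ComponentsNilAdjacency} two distinct vertices $(g_1,h_1)$ and $(g_2,h_2)$ are adjacent precisely when both $\langle g_1,g_2\rangle$ and $\langle h_1,h_2\rangle$ are nilpotent. The governing idea is that the two ``coordinate axes'' $A:=\{(g,1):g\in G-\nil(G)\}$ and $B:=\{(1,h):h\in H-\nil(H)\}$ are cheap to connect to one another, because a single generator always yields a cyclic, hence nilpotent, subgroup.

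First I would record the crucial edges. For any $g\in G-\nil(G)$ and any $h\in H-\nil(H)$, the subgroups $\langle g,1\rangle=\langle g\rangle$ and $\langle 1,h\rangle=\langle h\rangle$ are cyclic, hence nilpotent, so Lemma \ref{ComponentsNilAdjacency} makes $(g,1)$ and $(1,h)$ adjacent. Thus every vertex of $A$ is joined to every vertex of $B$. Since $G$ and $H$ are both non-nilpotent, neither $A$ nor $B$ is empty, and the two parts are disjoint (an element of $A$ has nonidentity first coordinate, while every element of $B$ has identity first coordinate). Hence $S:=A\cup B$ induces a subgraph containing a complete bipartite graph with both parts nonempty, and is therefore connected.

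Next I would attach an arbitrary vertex $(g,h)$ to $S$. Since $(g,h)$ is a vertex, $g\notin\nil(G)$ or $h\notin\nil(H)$. If $g\notin\nil(G)$, then $(g,1)\in A\subseteq S$, and the pair $(g,h),(g,1)$ is adjacent because $\langle g,g\rangle=\langle g\rangle$ and $\langle h,1\rangle=\langle h\rangle$ are cyclic, hence nilpotent; if instead $h\notin\nil(H)$, the symmetric computation shows $(g,h)$ is adjacent to $(1,h)\in B\subseteq S$. When $(g,h)$ already lies on an axis it is itself a vertex of $S$. Consequently every vertex is equal to, or adjacent to, a vertex of the connected set $S$, which forces $\Gamma_{\frakN}(G\times H)$ to be connected.

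The only place a hypothesis is genuinely used is the step making $S$ connected: it requires both $A$ and $B$ nonempty, i.e.\ both $G-\nil(G)\neq\emptyset$ and $H-\nil(H)\neq\emptyset$, which is exactly the assumption that each factor is non-nilpotent. I therefore expect the only real care to go into verifying that a non-nilpotent finite group satisfies $\nil(G)\neq G$ (immediate, since otherwise every pair of elements would generate a nilpotent subgroup and $G$ itself would be nilpotent) and into the routine bookkeeping that the edges produced above are edges between \emph{distinct} vertices. No deeper obstacle arises; the product structure trivializes the very connectivity that can fail for a single factor.
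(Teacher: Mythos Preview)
Your proof is correct and uses essentially the same approach as the paper: both arguments route every vertex through the ``axis'' vertices $(g,1)$ and $(1,h)$, exploiting that $\langle g,1\rangle$, $\langle 1,h\rangle$, $\langle g,g\rangle$, and $\langle h,1\rangle$ are all cyclic and hence nilpotent. The paper carries this out by a short case analysis on which coordinates of the two given vertices lie outside the hypercenter, writing down explicit paths of length at most three; you instead first isolate the connected hub $S=A\cup B$ and then show every vertex is adjacent to it, which is a cleaner packaging of the same idea.
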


\begin{proof}
Let $(g_1, h_1), \, (g_2, h_2) \in G \times H - Z^*(G \times H)$. Then $g_1 \notin Z^*(G)$ or $h_1 \notin Z^*(H)$ and $g_2 \notin Z^*(G)$ or $h_2 \notin Z^*(H)$. Consider the following cases:

\noindent \textbf{Case 1.} $g_1 \notin Z^*(G)$ and $h_2 \notin Z^*(H)$. In this case the path $(g_1, h_1)$, $(g_1, 1)$, $(1, h_2)$, $(g_2, h_2)$ connecting the vertices $(g_1, h_1), (g_2, h_2)$. 

\noindent \textbf{Case 2.} $g_1 \in Z^*(G)$ and $g_2 \in Z^*(G)$. Since $G$ is non-nilpotent, there exists $g \in G - Z^*(G)$. Then $(g_1, h_1)$, $(g, 1)$, $(g_2, h_2)$ is a path connecting   $(g_1, h_1)$ and $(g_2, h_2)$. 

The remaining cases are analogous to one of the above.
\end{proof}

\begin{example}
An illustration of the above result is as follows:
\begin{figure}[H]
\centering
\includegraphics[width=0.5\textwidth]{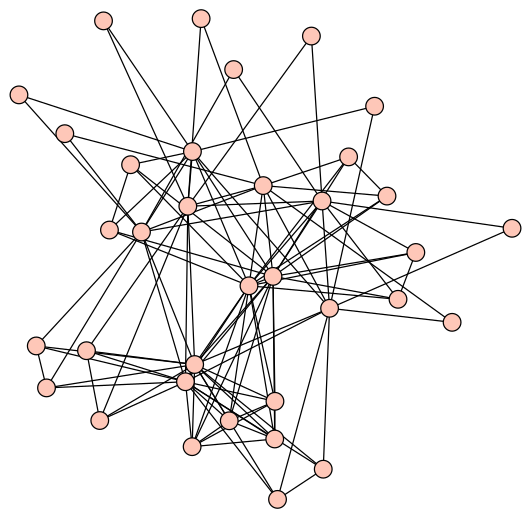}
\caption{Nilpotent graph of $S_{3} \times S_{3}$}
\end{figure}
\end{example}

Let $\kappa_j(\Gamma_{\frakN}(G))$ denote the number of vertices of the $j$-th connected component of $\Gamma_\frakN(G)$. To simplify the notation, from now on, it will be used $\mathcal{C}(G)$, $\kappa(G)$, and $k_j(G)$, to denote $\mathcal{C}(\Gamma_\frakN(G))$, $\kappa(\Gamma_{\frakN}(G))$ and $\kappa_j(\Gamma_{\frakN}(G))$, respectively. 


\begin{theorem}
\label{KappaProductNilpotent}
For every finite nilpotent group $H$, it holds $ \kappa(G \times H) = \kappa(G)$. In particular, $\Gamma_{\frakN}(G)$ is connected if and only if $\Gamma_{\frakN}(G\times H)$ is connected for every finite nilpotent group $H$.
\end{theorem}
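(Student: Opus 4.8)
The plan is to reduce the whole problem to the first coordinate. First I would pin down the vertex set of $\Gamma_\frakN(G\times H)$. Since $H$ is nilpotent, every $2$-generated subgroup of $H$ sits inside a nilpotent group and is therefore nilpotent, so $\nil(H)=H$; combined with Corollary \ref{NilProduct} this gives $\nil(G\times H)=\nil(G)\times H$, and hence the vertex set
\[
(G\times H)-\nil(G\times H)=(G-\nil(G))\times H .
\]
(Here $G\times H$ is non-nilpotent because $G$ is, so the graph is indeed defined.)

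Next I would simplify the adjacency relation. By Lemma \ref{ComponentsNilAdjacency}, two distinct vertices $(g_1,h_1)$ and $(g_2,h_2)$ are adjacent if and only if $\langle g_1,g_2\rangle$ and $\langle h_1,h_2\rangle$ are both nilpotent; but the second subgroup is automatically nilpotent since $H$ is. Thus adjacency in $\Gamma_\frakN(G\times H)$ depends only on the first coordinates: $(g_1,h_1)\sim(g_2,h_2)$ iff the vertices are distinct and $\langle g_1,g_2\rangle$ is nilpotent. In particular, for a fixed $g\in G-\nil(G)$ the fibre $\{g\}\times H$ is a clique, because any two of its vertices share the first coordinate $g$ and $\langle g,g\rangle=\langle g\rangle$ is nilpotent.

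The core step is to match components via the projection $\pi\colon (G-\nil(G))\times H\to G-\nil(G)$ onto the first factor, showing that the components of $\Gamma_\frakN(G\times H)$ are exactly the preimages $\pi^{-1}(C)$ of the components $C$ of $\Gamma_\frakN(G)$. This requires two verifications. First, each $\pi^{-1}(C)$ is connected: given a path $g_1=x_0,x_1,\dots,x_k=g_2$ in $C$, I lift it to $(x_0,h_1),(x_1,h_1),\dots,(x_k,h_1)$ (consecutive vertices are adjacent since the $x_i$ are, and distinct since the $x_i$ are), and, if needed, append the edge $(g_2,h_1),(g_2,h_2)$ inside the fibre. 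Second, no edge joins $\pi^{-1}(C)$ to $\pi^{-1}(C')$ for $C\neq C'$: if $(g,h)\sim(g',h')$ then either $g=g'$, or $\langle g,g'\rangle$ is nilpotent with $g\neq g'$, i.e. $g\sim g'$ in $\Gamma_\frakN(G)$; in both cases $g,g'$ lie in the same component of $\Gamma_\frakN(G)$. Together these give a bijection $C\mapsto\pi^{-1}(C)$ between the two sets of components, whence $\kappa(G\times H)=\kappa(G)$. The ``in particular'' clause then follows, since $\kappa(G\times H)=1$ if and only if $\kappa(G)=1$ (one may also simply take $H$ trivial for the converse).

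The main obstacle I expect is purely the bookkeeping around repeated first coordinates: a path in $\Gamma_\frakN(G\times H)$ can revisit a given first coordinate, so its image under $\pi$ is a walk rather than a path, and the fibres must be treated as cliques. Once one observes that collapsing repeated first coordinates turns such a walk into a genuine path in $\Gamma_\frakN(G)$, and that each fibre is internally connected, both directions go through without further difficulty. Beyond confirming that adjacency genuinely factors through $\pi$, there is no structural hardness here.
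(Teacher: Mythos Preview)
Your proof is correct and follows essentially the same route as the paper: identify $\nil(G\times H)=\nil(G)\times H$ via Corollary~\ref{NilProduct}, then use Lemma~\ref{ComponentsNilAdjacency} to see that adjacency in $\Gamma_\frakN(G\times H)$ depends only on the first coordinate, so the components are the $\pi$-preimages of the components of $\Gamma_\frakN(G)$. Your treatment is in fact more careful than the paper's, which glosses over the case $g_1=g_2$ (where $(g_1,h_1)\sim(g_1,h_2)$ but $g_1$ is not adjacent to itself); your explicit handling of the fibres as cliques fills that small gap.
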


\begin{proof}
Let $H$ be any finite nilpotent group. Then $Z^\ast(H) = \nil(H) = H$. So, by Corollary \ref{NilProduct}, $\nil(G \times H) = \nil(G) \times H$. Thus, by Lemma \ref{ComponentsNilAdjacency},  
$(g,h), (x,y)$ are adjacent in $\Gamma_{\frakN}(G\times H)$ if and only if $g$ and $x$ are adjacent in $\Gamma_{\frakN}(G)$. Then, there is no path from $g$ to $x$ if and only if there is no path from $(g,h)$ to $(x,y)$, for all $h, y \in H$. This implies that $\kappa(G \times H) = \kappa(G)$.
\end{proof}

\begin{proposition}
\label{NilDivisor}
Let $H$ be a finite nilpotent group. Then $|H|$ divides $k_i(G\times H)$ for all $i\in \{1,\ldots, \kappa(G\times H)\}$. 
\end{proposition}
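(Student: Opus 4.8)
The plan is to refine the analysis in the proof of Theorem \ref{KappaProductNilpotent} so as to describe each connected component of $\Gamma_{\frakN}(G\times H)$ explicitly: I claim every such component has, as its vertex set, a set of the form $V\times H$, where $V$ is the vertex set of some connected component of $\Gamma_{\frakN}(G)$. Once this is established the divisibility is immediate, since $|V\times H|=|V|\cdot|H|$.

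First I would fix the ambient data. Because $H$ is nilpotent, $\nil(H)=H$, so Corollary \ref{NilProduct} yields $\nil(G\times H)=\nil(G)\times H$; hence the vertex set of $\Gamma_{\frakN}(G\times H)$ is $(G-\nil(G))\times H$. By Lemma \ref{ComponentsNilAdjacency}, and using that $\langle h,y\rangle$ is nilpotent for all $h,y\in H$, two \emph{distinct} vertices $(g,h)$ and $(x,y)$ are adjacent if and only if $\langle g,x\rangle$ is nilpotent. I would single out one consequence for later use: for a fixed $x\in G-\nil(G)$ the slice $\{x\}\times H$ is a clique of $\Gamma_{\frakN}(G\times H)$, because $\langle x\rangle$ is cyclic, hence nilpotent, so any two distinct vertices $(x,h),(x,y)$ are adjacent. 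This is precisely the point where the case $g=x$ must be treated with a little more care than the blanket statement ``$(g,h)\sim(x,y)$ iff $g\sim x$'' used earlier.

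Next I would identify the component of an arbitrary vertex $(g,h)$. Let $C_j\in\mathcal{C}(G)$ be the connected component of $g$ in $\Gamma_{\frakN}(G)$, with vertex set $V_j$, so that $|V_j|=k_j(G)$. To show the component of $(g,h)$ equals $V_j\times H$, I argue both inclusions. For one direction, if $(x,y)$ is joined to $(g,h)$ by a path, then projecting that path onto the first coordinate produces a walk from $g$ to $x$ in $\Gamma_{\frakN}(G)$, since consecutive first coordinates are either equal or adjacent (the corresponding generated subgroup being nilpotent); whence $x\in V_j$. For the other direction, given $x\in V_j$ and $y\in H$, I take a path $g=u_0,u_1,\dots,u_k=x$ in $\Gamma_{\frakN}(G)$ and lift it to $(g,h),(u_1,y),(u_2,y),\dots,(u_k,y)=(x,y)$; consecutive vertices are adjacent by the criterion above, the vertices are pairwise distinct because their first coordinates are, and the degenerate case $x=g$ is covered by the clique observation on $\{g\}\times H$.

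Combining the two inclusions, the component of $(g,h)$ has exactly $|V_j|\cdot|H|=k_j(G)\cdot|H|$ vertices, so each $k_i(G\times H)$ is an integer multiple of $|H|$, which proves the claim. I expect the only genuinely delicate point to be the correct handling of equal first coordinates: it is exactly the cyclicity of $\langle x\rangle$ (making $\{x\}\times H$ a clique) that guarantees each component is the \emph{full} product $V_j\times H$ rather than a proper subset, and this is where the nilpotency of $H$ is essential.
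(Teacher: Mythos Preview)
Your proof is correct and follows essentially the same approach as the paper: both show that each connected component of $\Gamma_{\frakN}(G\times H)$ is of the form $V_j\times H$ for some component $V_j$ of $\Gamma_{\frakN}(G)$, whence $|H|$ divides its cardinality. The paper's version is terser, relying on the component correspondence already set up in Theorem \ref{KappaProductNilpotent} and only adding the observation that each slice $\{g\}\times H$ lies in a single component; your version is more self-contained and, in particular, handles explicitly the case of equal first coordinates (showing $\{g\}\times H$ is a clique), a point the paper's proof of Theorem \ref{KappaProductNilpotent} passes over somewhat loosely.
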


\begin{proof}
Let $g$ be a vertex in the $i$-th connected component of $\Gamma_{\frakN}(G)$. Then, by Lemma 
\ref{ComponentsNilAdjacency}, for all $h, k \in H$, $(g, h)$ and $(g, k)$ are in the same connected component of $\Gamma_{\frakN}(G\times H)$. Then, for every $g$ in $i$-th component of $\Gamma_{\frakN}(G)$ there are $|H|$ elements of the form $(g, h)$, with $h \in H$, in the $i$-th connected component of $\Gamma_{\frakN}(G\times H)$.
\end{proof}

\begin{lemma}
\label{QuotientNilAdjacency}
Let $x, y \in G$. Then $\langle x, y\rangle$ is a nilpotent subgroup of $G$ if and only if $\langle xZ^*(G), yZ^*(G) \rangle$ is a nilpotent subgroup of $G/Z^*(G)$.
\end{lemma}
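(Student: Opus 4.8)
The plan is to prove the two implications separately, using the hypercenter's defining property that $Z^{*}(G)$ is built from the ascending central series together with Theorem \ref{ClementNil}, which says that a normal subgroup contained in some $\zeta_i(G)$ can be ``factored out'' without destroying nilpotency. Throughout I write $\bar{G} := G/Z^{*}(G)$ and $\bar{x} := xZ^{*}(G)$ for the canonical images.

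The easier direction is the forward implication. Suppose $\langle x, y\rangle$ is nilpotent. The canonical projection $\pi\colon G \to \bar{G}$ is a surjective homomorphism, and homomorphic images of nilpotent groups are nilpotent; since $\pi(\langle x, y\rangle) = \langle \bar{x}, \bar{y}\rangle$, this subgroup of $\bar{G}$ is nilpotent. This step is routine and uses only the standard fact that nilpotency passes to quotients.

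For the reverse implication, suppose $\langle \bar{x}, \bar{y}\rangle$ is a nilpotent subgroup of $\bar{G}$. Set $U := \langle x, y\rangle$ and let $N := U \cap Z^{*}(G)$, which is a normal subgroup of $U$. The key observation is that $\langle \bar{x}, \bar{y}\rangle = \pi(U) \cong U/N$, so $U/N$ is nilpotent. I would then argue that $N \leq \zeta_i(U)$ for some $i$: since $Z^{*}(G) = \bigcup_{j} \zeta_j(G)$ and $U$ is finite, there is an index $i$ with $N \leq \zeta_i(G) \cap U$, and one checks that central elements of $G$ lying in $U$ contribute to the upper central series of $U$, giving $N \leq \zeta_i(U)$. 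With $N \trianglelefteq U$, $N \leq \zeta_i(U)$, and $U/N$ nilpotent, Theorem \ref{ClementNil} (applied with $U$ in place of $G$) yields that $U = \langle x, y\rangle$ is nilpotent, as required.

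The main obstacle is the intermediate claim $N \leq \zeta_i(U)$. It is \emph{not} automatic that $\zeta_i(G) \cap U \leq \zeta_i(U)$ for the upper central series, since centralizing conditions in $G$ are stronger than those needed in the smaller group $U$; however, the containment $\zeta_i(G)\cap U \leq \zeta_i(U)$ does hold because elements of $\zeta_i(G)$ satisfy the relevant commutator identities with all of $G$, hence a fortiori with all of $U$, and these identities are exactly what place them in the upper central series of $U$. I would verify this by induction on $i$, checking that $x \in \zeta_{i+1}(G) \cap U$ forces $[x, u] \in \zeta_i(G) \cap U \leq \zeta_i(U)$ for every $u \in U$, which is precisely the condition for $x \in \zeta_{i+1}(U)$. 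Once this is in hand, Theorem \ref{ClementNil} closes the argument cleanly.
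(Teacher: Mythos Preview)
Your proof is correct and follows essentially the same route as the paper: the forward direction is identical, and for the converse both arguments set $U=\langle x,y\rangle$, $N=U\cap Z^*(G)$, observe $U/N\cong \langle \bar x,\bar y\rangle$ is nilpotent, show $N$ lies in some term of the upper central series of $U$, and then invoke Theorem~\ref{ClementNil}. The only cosmetic difference is in that middle step: the paper obtains $N\leq Z^*(U)$ via the identification $Z^*(G)=\nil(G)$ (so elements of $N$ generate nilpotent subgroups with every $g\in U$, hence lie in $\nil(U)=Z^*(U)$), whereas you prove $\zeta_i(G)\cap U\leq \zeta_i(U)$ directly by the commutator induction you sketch---both justifications are valid.
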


\begin{proof} 
Suppose that $\langle x, y\rangle$ is a nilpotent subgroup of $G$, and let $\pi: G\longrightarrow G/Z^*(G)$ be the canonical epimorphism. Note that $\langle xZ^*(G), yZ^*(G) \rangle = \pi(\langle x, y\rangle)$ and then it is a nilpotent subgroup of $G/Z^*(G)$. 

Reciprocally, suppose that $\langle xZ^*(G), yZ^*(G) \rangle$ is a nilpotent subgroup of $G/Z^*(G)$. First note that $Z^*(G) \cap \langle x, y\rangle \leq Z^*(\langle x, y \rangle)$. In fact, if $z \in Z^*(G) \cap \langle x, y \rangle$, then $z \in \langle x, y \rangle$ and $\langle z, g \rangle$ is nilpotent for all $g \in \langle x, y \rangle$. That is, $z \in Z^*(\langle x, y \rangle)$. Now, it is clear that $Z^*(G) \cap \langle x, y\rangle \, \unlhd \, \langle x, y\rangle$, and consequently $Z^*(G) \cap \langle x, y\rangle \, \unlhd \, Z^*(\langle x, y\rangle)$. Then
\[
\langle xZ^*(G), yZ^*(G) \rangle =  \pi(\langle x, y\rangle)
\cong \langle x, y \rangle/(\langle x, y \rangle \cap Z^*(G)).
\]
Since $\langle xZ^*(G), yZ^*(G) \rangle$ is nilpotent, using Theorem \ref{ClementNil} follows that 
$\langle x, y \rangle$ is nilpotent. 
\end{proof}

\begin{theorem}
\label{GraphIsomorphism}
$\Gamma_{\frakN}(G) \cong \Gamma_{\frakN}(G/Z^*(G)\times Z^*(G))$. In particular $\kappa(G) = \kappa(G/Z^*(G))$.
\end{theorem}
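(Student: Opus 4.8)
The plan is to exhibit an explicit graph isomorphism $\phi$ between the two graphs and then to read off the statement about connected components from Theorem \ref{KappaProductNilpotent}. Throughout write $Z := Z^*(G)$ and $c$ for the length at which the upper central series of $G$ stabilizes, so $Z = \zeta_c(G)$.

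First I would pin down the two vertex sets. Since $G$ is finite, $\nil(G) = Z$, so the vertices of $\Gamma_\frakN(G)$ are $G - Z$. For the product I would compute $\nil(G/Z \times Z)$ via Corollary \ref{NilProduct}, which gives $\nil(G/Z) \times \nil(Z)$, using two facts. (i) $Z$ is nilpotent: a one-line induction shows $\zeta_i(G) \le \zeta_i(Z)$, whence $Z = \zeta_c(G) \le \zeta_c(Z) \le Z$, so $Z = \zeta_c(Z)$; consequently $\nil(Z) = Z$. (ii) $Z^*(G/Z) = 1$, because $Z(G/Z) = \zeta_{c+1}(G)/\zeta_c(G) = 1$ and a finite group with trivial center has trivial hypercenter; since $G/Z$ is finite this gives $\nil(G/Z) = 1$. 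Hence $\nil(G/Z \times Z) = \{1\}\times Z$, so the vertices of $\Gamma_\frakN(G/Z\times Z)$ are the pairs $(\bar g, z)$ with $\bar g \ne 1$, a set of size $(|G|/|Z| - 1)|Z| = |G| - |Z| = |G - Z|$. I would also record that $G/Z$ is non-nilpotent, since otherwise Theorem \ref{ClementNil} applied to $Z \le \zeta_c(G)$ with $G/Z$ nilpotent would force $G$ nilpotent; this guarantees both graphs are genuinely defined.

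Next I would construct $\phi$. Fix a transversal $T$ of $Z$ in $G$ with $1 \in T$, so every $g \in G$ is written uniquely as $g = t_g z_g$ with $t_g \in T$ representing $gZ$ and $z_g \in Z$, and define $\phi(g) = (gZ, z_g)$. On each coset the map $g \mapsto z_g$ is a bijection onto $Z$, so $\phi$ is a bijection $G \to G/Z \times Z$; it carries $Z$ onto $\{1\}\times Z$ and therefore restricts to a bijection $G - Z \to \{(\bar g, z) : \bar g \ne 1\}$ between the two vertex sets. I would then verify adjacency. For distinct $g, h \in G - Z$, Lemma \ref{QuotientNilAdjacency} says $\langle g, h\rangle$ is nilpotent iff $\langle gZ, hZ\rangle$ is nilpotent in $G/Z$; on the other side, Lemma \ref{ComponentsNilAdjacency} says $\langle \phi(g), \phi(h)\rangle$ is nilpotent iff both $\langle gZ, hZ\rangle$ and $\langle z_g, z_h\rangle$ are nilpotent, and since $\langle z_g, z_h\rangle \le Z$ is always nilpotent this again reduces to $\langle gZ, hZ\rangle$ being nilpotent. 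The two criteria coincide, so $\phi$ preserves adjacency and non-adjacency and is a graph isomorphism. Conceptually, both graphs are the same blow-up of $\Gamma_\frakN(G/Z)$: each vertex $\bar g \ne 1$ is replaced by a clique of size $|Z|$, and two such cliques are completely joined exactly when the corresponding vertices of $\Gamma_\frakN(G/Z)$ are adjacent, so any fiber-respecting bijection is an isomorphism.

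Finally, for the ``in particular'' clause: isomorphic graphs have equally many connected components, so $\kappa(G) = \kappa(G/Z \times Z)$, and Theorem \ref{KappaProductNilpotent} applied to the finite non-nilpotent group $G/Z$ and the finite nilpotent group $Z$ yields $\kappa(G/Z \times Z) = \kappa(G/Z)$, whence $\kappa(G) = \kappa(G/Z)$. The main obstacle I anticipate is the bookkeeping that aligns the two graphs: one must establish that $Z$ is nilpotent so that the second coordinate never obstructs nilpotency, making adjacency in the product depend only on the $G/Z$-coordinate, and then match this with Lemma \ref{QuotientNilAdjacency}, which says adjacency in $\Gamma_\frakN(G)$ likewise depends only on images in $G/Z$. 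Once these two ``projections to $G/Z$'' are seen to agree, the remaining verifications are routine.
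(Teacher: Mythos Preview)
Your argument is correct and follows essentially the same route as the paper: both fix a transversal of $Z^*(G)$ in $G$, build the obvious bijection between $G$ and $G/Z^*(G)\times Z^*(G)$ (yours goes $G\to G/Z\times Z$, the paper's goes the other way), and then combine Lemma~\ref{ComponentsNilAdjacency} with Lemma~\ref{QuotientNilAdjacency} to see adjacency depends only on the image in $G/Z^*(G)$, finishing with Theorem~\ref{KappaProductNilpotent}. You are in fact more careful than the paper in explicitly identifying $\nil(G/Z\times Z)=\{1\}\times Z$ and checking that $Z$ is nilpotent and $G/Z$ non-nilpotent, details the paper leaves implicit.
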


\begin{proof}
Let $T= \{g_1,\ldots, g_m\}$ be a left transversal of $Z^*(G)$ in $G$. Let $\widetilde{G} := G/Z^*(G) \times Z^*(G)$, and define $f: \widetilde{G} \longrightarrow  G$ by $f(gZ^*(G), z) = g_iz$,
where $g_i$ satisfies $gZ^*(G) = g_i Z^*(G)$. It is straightforward to show that $f$ is a well-defined function, and also bijective. 




Now, let $(gZ^*(G), z), (hZ^*(G), w) \in \widetilde{G}$ arbitrary. Then, by Lemma \ref{ComponentsNilAdjacency},  $\langle(gZ^*(G), z), \, (hZ^*(G), w)\rangle$ is a nilpotent subgroup of $\widetilde{G}$ if and only if $\langle gZ^*(G), \, hZ^*(G)\rangle$ is a nilpotent subgroup of $G/Z^*(G)$. However, 
\[
\langle gZ^*(G), hZ^*(G)\rangle = \langle g_izZ^*(G), g_jwZ^*(G) \rangle,
\]
for some $z, w \in Z^*(G)$. 

From Lemma \ref{QuotientNilAdjacency} follows that $\langle g_izZ^*(G), \, g_jwZ^*(G)\rangle$ is a nilpotent subgroup of $G/Z^*(G)$ if and only if $\langle g_iz, g_jw\rangle = \langle f(gZ^*(G), z), f(hZ^*(G), w)\rangle$ is a nilpotent subgroup of $G$, 
so, by restricting $f$ to $\widetilde{G} - Z^*(\widetilde{G})$, one obtains a bijective function preserving adjacency. That is an isomorphism between the corresponding graphs. 
Finally, since $Z^*(G)$ es nilpotent follows from Theorem \ref{KappaProductNilpotent} that $\kappa(G) = \kappa(G/Z^*(G)\times Z^*(G)) = \kappa(G/Z^*(G))$. 
\end{proof}

\begin{lemma}
\label{ZetaDivisor}
$|Z^*(G)| = \gcd (k_1(G),\ldots, k_n(G), \,|G|)$, where $n = \kappa(G)$.   
\end{lemma}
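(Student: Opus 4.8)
The plan is to sandwich $d := \gcd(k_1(G), \ldots, k_n(G), |G|)$ between $|Z^*(G)|$ and itself, using two divisibility facts together with the elementary count of the vertices. Throughout I write $Z := Z^*(G)$ and recall that, since $G$ is finite, $\nil(G) = Z$, so the vertex set of $\Gamma_\frakN(G)$ is $G - Z$ and consequently
\[
|G| = |Z| + \sum_{i=1}^{n} k_i(G).
\]

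First I would show that $|Z|$ divides $k_i(G)$ for every $i$. Since $Z$ is nilpotent and $G$ is non-nilpotent, $G/Z$ is non-nilpotent: otherwise $Z \leq \zeta_j(G)$ for suitable $j$ together with $G/Z$ nilpotent would force $G$ nilpotent by Theorem \ref{ClementNil}. Hence $\Gamma_\frakN(G/Z)$, and thus $\Gamma_\frakN(G/Z \times Z)$, are defined. By Theorem \ref{GraphIsomorphism}, $\Gamma_\frakN(G) \cong \Gamma_\frakN(G/Z \times Z)$; a graph isomorphism carries connected components to connected components of equal cardinality, so the multisets $\{k_i(G)\}$ and $\{k_i(G/Z \times Z)\}$ coincide. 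Applying Proposition \ref{NilDivisor} with the nilpotent group $Z$ then yields $|Z| \mid k_i(G/Z \times Z) = k_i(G)$ for all $i$. Since also $|Z| \mid |G|$ by Lagrange's theorem, $|Z|$ is a common divisor of $k_1(G), \ldots, k_n(G)$ and $|G|$, so $|Z| \mid d$.

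For the reverse divisibility I would exploit the vertex count above. By definition $d \mid k_i(G)$ for each $i$ and $d \mid |G|$, so $d$ divides $\sum_{i=1}^n k_i(G) = |G| - |Z|$; combined with $d \mid |G|$ this gives $d \mid |Z|$. Therefore $|Z| \mid d$ and $d \mid |Z|$, whence $d = |Z|$, as claimed.

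I expect the only genuinely substantive step to be the first one, where the divisibility $|Z| \mid k_i(G)$ is extracted: it relies on recognizing $\Gamma_\frakN(G)$ as the nilpotent graph of the direct product $G/Z \times Z$ (Theorem \ref{GraphIsomorphism}) so that the factor divisibility of Proposition \ref{NilDivisor} becomes applicable. Once that is secured, the remainder is a short gcd manipulation anchored on the identity $|G| = |Z| + \sum_i k_i(G)$, and no further obstacle arises.
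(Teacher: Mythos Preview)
Your proof is correct and follows essentially the same approach as the paper: both use Theorem~\ref{GraphIsomorphism} together with Proposition~\ref{NilDivisor} to obtain $|Z^*(G)|\mid k_i(G)$, and then the identity $|G|=|Z^*(G)|+\sum_i k_i(G)$ to finish. The paper phrases the final step by dividing through and observing that the resulting integers have gcd $1$, whereas you argue directly that $d\mid |Z^*(G)|$; these are equivalent, and your explicit check that $G/Z^*(G)$ is non-nilpotent is a small detail the paper leaves implicit.
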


\begin{proof}
Using Proposition \ref{NilDivisor} follows that  $|Z^*(G)|$ divides $k_i(G/Z^*(G)\times Z^*(G)) = k_i(G)$ for all $i \in \{1, \ldots, \kappa(G)\}$. Now, note that $\sum_{i = 1}^{n} k_i(G) + |Z^*(G)| = |G|$. 
Thus 
\[
1 = \tfrac{|G|}{|Z^*(G)|} - \sum_{i = 1}^{n}\tfrac{k_i(G)}{|Z^*(G)|},
\]
and so
\[
\gcd\left(\tfrac{k_1(G)}{|Z^*(G)|}, \ldots, \tfrac{k_n(G)}{|Z^*(G)|}, \tfrac{|G|}{|Z^*(G)|}\right) = 1,
\]
which implies the assertion.
\end{proof}

\begin{corollary}
\label{OrderGroups}
If $\gcd(k_i(G), k_j(G)) = 1$ for some $i, j$, then for any finite non-nilpotent group $H$ with $\Gamma_{\frakN}(G) \cong \Gamma_{\frakN}(H)$, then $|G| = |H|$.
\end{corollary}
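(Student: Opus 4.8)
The plan is to leverage Lemma \ref{ZetaDivisor} twice---once for $G$ and once for $H$---together with the elementary observation that an isomorphism of graphs preserves both the number of connected components and the multiset of their sizes. First I would extract the structural consequence of the coprimality hypothesis: since $\gcd(k_i(G), k_j(G)) = 1$ for some $i,j$, and since $|Z^*(G)| = \gcd(k_1(G),\ldots,k_n(G),|G|)$ divides $\gcd(k_i(G), k_j(G))$ by Lemma \ref{ZetaDivisor} (with $n = \kappa(G)$), it follows immediately that $|Z^*(G)| = 1$. Because $G$ is finite we have $\nil(G) = Z^*(G)$, so the vertex set $G - \nil(G)$ of $\Gamma_\frakN(G)$ has exactly $|G| - 1$ elements.

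Next I would transport the component data across the isomorphism. If $\Gamma_\frakN(G) \cong \Gamma_\frakN(H)$, then any graph isomorphism carries connected components to connected components bijectively and preserves their cardinalities. Hence $\kappa(G) = \kappa(H) = n$ and, after relabelling, the component sizes satisfy $k_\ell(H) = k_\ell(G)$ for every $\ell$. In particular the same pair of indices gives $\gcd(k_i(H), k_j(H)) = \gcd(k_i(G), k_j(G)) = 1$.

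Applying Lemma \ref{ZetaDivisor} now to $H$ (which is finite non-nilpotent, so the lemma is available) yields $|Z^*(H)| = \gcd(k_1(H),\ldots,k_n(H),|H|)$, and this divides $\gcd(k_i(H),k_j(H)) = 1$; therefore $|Z^*(H)| = 1$ as well. Consequently the vertex set of $\Gamma_\frakN(H)$ has exactly $|H| - 1$ elements. Finally, an isomorphism of graphs preserves the number of vertices, so $|G| - 1 = |H| - 1$, giving $|G| = |H|$.

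I do not anticipate a genuine obstacle here: every step is a short deduction from the already-established Lemma \ref{ZetaDivisor} and from the definition of graph isomorphism. The one point deserving care is the symmetric use of the lemma---one must confirm that $|Z^*(H)|$ also collapses to $1$ rather than assuming it, which is exactly what the transported coprimality of component sizes secures. Once both hypercenters are trivial, equating the vertex counts $|G|-1$ and $|H|-1$ is immediate.
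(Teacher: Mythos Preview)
Your proposal is correct and follows essentially the same route as the paper: apply Lemma~\ref{ZetaDivisor} on both sides to force $|Z^*(G)|=|Z^*(H)|=1$, then equate vertex counts. You are simply more explicit than the paper in spelling out why the coprimality hypothesis transfers to $H$ via the preservation of component sizes under graph isomorphism, which is a welcome clarification.
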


\begin{proof}
Since $\Gamma_{\frakN}(G) \cong \Gamma_{\frakN}(H)$, it follows that $|G| - |Z^*(G)| = |H| - |Z^*(H)|$. From  Lemma \ref{ZetaDivisor}, $|Z^*(G)| = 1 = |Z^*(H)|$, and so $|G| = |H|$.
\end{proof}

\begin{theorem}\label{not-seft-comp}
No finite non-nilpotent group has a self-complementary nilpotent graph. 
\end{theorem}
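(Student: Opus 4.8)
\emph{Plan.} I would argue by contradiction, assuming $\Gamma_\frakN(G)$ is self-complementary, and exploit two features of such graphs: they are connected, and they impose rigid symmetry on the degree sequence and on the clique and independence numbers. The decisive first observation is that a self-complementary graph is necessarily connected, since the complement of any disconnected graph is connected and hence cannot be isomorphic to the disconnected graph. Because $\overline{\Gamma_\frakN(G)}$ is exactly the non-nilpotent graph of $G$ — which is connected for every finite non-nilpotent group — this step is consistent with self-complementarity and does not by itself finish the proof; connected nilpotent graphs genuinely occur (for instance $\Gamma_\frakN(S_n)$ whenever $n\ge 5$ and neither $n$ nor $n-1$ is prime). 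The whole difficulty therefore lies in the connected case, where I would concentrate the work.

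The second step is a structural reduction via Theorem \ref{GraphIsomorphism} and Lemma \ref{ComponentsNilAdjacency}. Writing $m := |Z^*(G)|$ and $\overline{G} := G/Z^*(G)$, these results identify $\Gamma_\frakN(G)$ with the graph obtained from $\Gamma_\frakN(\overline{G})$ by replacing each vertex with a clique of size $m$ and joining all vertices of two such cliques exactly when the corresponding vertices of $\Gamma_\frakN(\overline{G})$ are adjacent (the lexicographic product $\Gamma_\frakN(\overline{G})[K_m]$), with $Z^*(\overline{G})=1$. The standard complementation identity $\overline{\Gamma_\frakN(\overline{G})[K_m]} \cong \overline{\Gamma_\frakN(\overline{G})}[\,\overline{K_m}\,]$ gives a complement whose cliques use at most one vertex per block, so comparing clique numbers yields $\omega(\Gamma_\frakN(G)) = m\,\omega(\Gamma_\frakN(\overline{G}))$ and $\omega(\overline{\Gamma_\frakN(G)}) = \alpha(\Gamma_\frakN(\overline{G}))$. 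Self-complementarity then forces $m\,\omega(\Gamma_\frakN(\overline{G})) = \alpha(\Gamma_\frakN(\overline{G}))$, which (together with the edge count below) is meant to be contradictory when $m\ge 2$ and otherwise reduces the problem to the block size $m=1$, i.e. to groups with trivial hypercenter.

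The third step supplies the numerical conditions. Equality of the number of edges of $\Gamma_\frakN(G)$ and of its complement forces $\binom{n}{2}$ to be even, where $n = |G| - |Z^*(G)|$ is the number of vertices (using $\nil(G)=Z^*(G)$ for finite $G$), hence $n \equiv 0$ or $1 \pmod 4$; carried through the lexicographic product the same count gives the divisibility restriction $m \mid (|\overline{G}|-1)$, and Lemma \ref{nilclique} relates $\omega$ and $\alpha$ to $|F(\overline{G})|$. Simultaneously, Lemma \ref{grado} shows the degree function is constant on conjugacy classes, since $\nil_G(x^g)=\nil_G(x)^g$, so the degree multiplicities must satisfy the palindromic identity $a_d = a_{n-1-d}$; at the extremes this reads
\[
\max_{x}|\nil_G(x)| + \min_{x}|\nil_G(x)| = |G| + |Z^*(G)| + 1 .
\]
The aim is to contradict this identity in the connected, trivial-hypercenter case, bounding the minimum nilpotentizer through the connectivity of the non-nilpotent graph and the maximum one through the structure of two-generated nilpotent subgroups.

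I expect the main obstacle to be precisely the control of $\max_x|\nil_G(x)|$. Unlike the centralizer $C_G(x)\subseteq \nil_G(x)$, the set $\nil_G(x)$ need not be a subgroup — for instance $|\nil_{S_4}((12)(34))| = 16 > |S_4|/2$ — so it cannot be bounded by coset counting and the naive estimate $|\nil_G(x)|\le |G|/2$ is false. Overcoming this, presumably by playing the palindromic degree identity against the clique relation $m\,\omega = \alpha$ and the congruence $n\equiv 0,1 \pmod 4$, while separating the cases according to whether $G$ possesses a strongly self-centralizing subgroup (Theorems \ref{ProperCentralizer2} and \ref{str}), is the crux of the argument.
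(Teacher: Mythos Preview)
Your proposal remains a plan rather than a proof: you explicitly flag the ``crux of the argument'' as unresolved, and nothing you have written closes it. The relation $m\,\omega(\Gamma_\frakN(\overline G)) = \alpha(\Gamma_\frakN(\overline G))$ is not by itself contradictory for $m\ge 2$ --- nothing you prove rules out, say, $m=2$ with $\omega=3$ and $\alpha=6$ --- and for $m=1$ the palindromic degree identity $\max_x|\nil_G(x)|+\min_x|\nil_G(x)|=|G|+2$ is only a necessary condition, not an obstruction; you yourself note that $|\nil_G(x)|$ can exceed $|G|/2$, so there is no obvious size or parity barrier to violate. The strategy of playing clique/independence numbers against degree symmetry may be completable, but as written the decisive step is simply absent.

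The paper's route is entirely different and far shorter, and it contains the one-line reduction you were groping for. Using Lemma~\ref{grado} together with the fact (cited from \cite{Kumar}) that $|Z^*(G)|$ divides every $|\nil_G(x)|$, a self-complementary isomorphism $f$ forces $|Z^*(G)|$ to divide both $|\nil_G(x)|-|Z^*(G)|$ and $|\nil_G(x)|-|Z^*(G)|-1$, hence $|Z^*(G)|=1$. The paper then passes to $H=G\times C_2$, claims that $(x,n)\mapsto(f(x),n)$ makes $\Gamma_\frakN(H)$ self-complementary as well, and reruns the first step on $H$ to obtain $|Z^*(H)|=1$, contradicting $Z^*(H)=Z^*(G)\times C_2\cong C_2$. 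You should be aware, however, that this last step as written is not quite right: on pairs $(x,0),(x,1)$ with the same first coordinate, both $\langle(x,0),(x,1)\rangle$ and $\langle(f(x),0),(f(x),1)\rangle$ are nilpotent, so the proposed map fails to exchange edges with non-edges there. In the language of your lexicographic-product description, the paper is tacitly asserting $\Gamma[K_2]\cong\overline{\Gamma}[\,\overline{K_2}\,]$ whenever $\Gamma\cong\overline{\Gamma}$, which is false in general; the passage from $G$ to $G\times C_2$ therefore needs more care than either you or the paper provide.
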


\begin{proof}
Suppose that $\Gamma_{\frakN}(G)$ is a self-complementary graph. Then, there exists a bijective function $f: G - Z^*(G) \longrightarrow G - Z^*(G)$ such that $\langle x, y \rangle$ is nilpotent if and only if $\langle f(x), f(y) \rangle$ is non-nilpotent. Consequently, for any  $x \in G - Z^*(G)$ holds 
\[
 |\nil_G(x)| - |Z^*(G)| - 1 = \deg(x) = \deg(f(x)) = |G| - |\nil_G(f(x))|.
\]
Thus, by \cite[Teorema 4.1]{Kumar},  $|Z^*(G)|$ divides $|G| - |\nil_G(f(x))| = |\nil_G(x)| - |Z^*(G)| - 1$ and $|\nil_G(x)| - |Z^*(G)|$ for all $x \in G- Z^*(G)$, so $|Z^*(G)| =1$. 

Let  $C_2$ be the cyclic group of order 2, $H := G \times C_2$, and consider the function $g: H - Z^*(H) \longrightarrow H - Z^*(H)$ defined by $g(x,n) = (f(x),n)$.
It is easy to see that this function is well-defined and bijective. 
It follows from Lemma  \ref{ComponentsNilAdjacency} that $\langle (x, n), (y, m) \rangle$ is nilpotent if and only if $\langle (f(x), n), (f(y), m) \rangle = \langle g(x, n), g(y, m) \rangle$ is non-nilpotent, for all $(x, n), (y, m) \in H - Z^*(H)$. Therefore, the nilpotent graph of $H$ is self-complementary too.  By a similar argument to the one used at the beginning, $|Z^*(H)| = 1$, which is a contradiction (because $|Z^*(H)| = |Z^*(G) \times C_2| = 2$, by Corollary \ref{NilProduct}).
\end{proof}

\section{Nilpotent graph of the Dihedral group}
\label{Section5}

In this section, a formula for the number of the connected components of the non-nilpotent dihedral group $D_n$ is presented.\\

It is well-known that $Z(D_n) \neq 1$ if and only if $n$ is even, in other words, no reflection of $D_n$ commutes with any rotation different from the identity if and only if $n$ is odd. This fact will be used in Theorem \ref{kDn}. 

\begin{theorem}\label{kDn}
Let $n \in \mathbb{Z}_{>0}$. Then 
\[
\kappa(D_n) =\begin{cases}     n + 1 & \mbox{ if } 2\nmid n\\
                               m + 1 & \mbox{ if } 2\mid n \wedge n \mbox{ is not a power of } 2
              \end{cases} 
\]
where $n = 2^km$,  $m \geq 3$ and $2\nmid m$.
\end{theorem}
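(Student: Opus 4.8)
The plan is to establish the odd case by a direct structural analysis of the nilpotent graph and then reduce the even case to it using results already available in the excerpt. Throughout I would write $D_n = \langle r, s \mid r^n = s^2 = 1,\ srs = r^{-1}\rangle$, so that $|D_n| = 2n$, the rotations are the powers $r^i$, and the reflections are the elements $sr^i$.

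First I would treat $n$ odd, which forces $n \geq 3$ since otherwise $D_n$ is nilpotent. By the remark preceding Theorem \ref{kDn} one has $Z(D_n) = 1$, and a trivial center makes the whole ascending central series collapse ($\zeta_1 = 1$ gives $\zeta_{j+1}/\zeta_j = Z(D_n/\zeta_j) = 1$ inductively), so $Z^\ast(D_n) = 1$; as $D_n$ is finite, $\nil(D_n) = Z^\ast(D_n) = 1$ and the vertex set is $D_n - \{1\}$. I would then classify $\langle g, h\rangle$ by the types of $g$ and $h$. If both are rotations, $\langle g, h\rangle \leq \langle r\rangle$ is cyclic, hence nilpotent, so the $n-1$ nontrivial rotations are pairwise adjacent and form one clique (a single connected component). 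If $g = r^i$ with $i \neq 0$ and $h = sr^j$, then $\langle g, h\rangle$ is dihedral of order $2\,\ord(r^i)$ with $\ord(r^i)$ odd and $>1$; likewise, for two distinct reflections the product $(sr^i)(sr^j) = r^{j-i}$ is a nontrivial rotation, so $\langle sr^i, sr^j\rangle$ is again dihedral of odd order $>1$. In both situations the subgroup contains a rotation of odd order $d > 1$ and a reflection of order $2$: these have coprime orders yet do not commute, so by the characterization in Theorem \ref{NilpotentEquivalence}(7) the subgroup is non-nilpotent. Hence no reflection is adjacent to any vertex, giving $n$ isolated reflections, and I conclude $\kappa(D_n) = 1 + n = n+1$.

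For $n$ even but not a power of $2$, write $n = 2^k m$ with $m \geq 3$ odd. By Example \ref{NilpotentizerDihedral}, $D_n/Z^\ast(D_n) \cong D_m$, and Theorem \ref{GraphIsomorphism} gives $\kappa(D_n) = \kappa(D_n/Z^\ast(D_n)) = \kappa(D_m)$; since $m$ is odd and $\geq 3$, the first part yields $\kappa(D_m) = m+1$, as required. The bookkeeping of adjacencies in the odd case is routine; the only points demanding genuine care are confirming that $Z(D_n) = 1$ forces $Z^\ast(D_n) = 1$, and verifying that every rotation–reflection and reflection–reflection pair generates an odd dihedral (hence non-nilpotent) subgroup via the coprime-order noncommuting elements. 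Once the odd case is in hand, the even case is immediate from the quotient description and the invariance of the component count recorded in Theorem \ref{GraphIsomorphism}.
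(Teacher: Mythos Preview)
Your proposal is correct and follows essentially the same route as the paper: analyze the odd case by showing that rotation--rotation pairs generate cyclic (hence nilpotent) subgroups while rotation--reflection and reflection--reflection pairs generate dihedral subgroups containing a nontrivial rotation of odd order and a reflection of order $2$ that fail to commute (so Theorem~\ref{NilpotentEquivalence}(7) forces non-nilpotency), yielding one clique of $n-1$ rotations and $n$ isolated reflections; then reduce the even case to the odd one via Example~\ref{NilpotentizerDihedral} and Theorem~\ref{GraphIsomorphism}. Your write-up is in fact slightly more explicit than the paper's in two places---you spell out why $Z(D_n)=1$ forces $Z^\ast(D_n)=1$, and you identify the generated subgroups as dihedral---but the underlying argument is identical (one cosmetic slip: ``dihedral of odd order $>1$'' should read ``dihedral with rotation subgroup of odd order $>1$'', since the dihedral group itself has even order).
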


\begin{proof}
\begin{enumerate}
    \item Suppose $n$ is an odd number. Let $\langle r \rangle$ be the subgroup of rotations of $D_n$ and $s$ be a reflection. Since $\gcd(|r^i|, |r^js|) = 1$ for all $i\in \{1, \ldots, n-1\}$, $j \in \{0, \ldots, n\}$, then $\langle r^i, r^js \rangle$ is non-nilpotent for all $i\in \{1, \ldots, n-1\}$, $j \in \{0, \ldots, n-1\}$. Otherwise, it would follow from Theorem \ref{NilpotentEquivalence} that some rotation different from the identity commutes with some reflection, which is not possible since $n$ is odd.\\
    
    On the other hand, if $i\neq j$, then $\langle r^is, r^js \rangle$ contains at least one rotation different from the identity, again implying that some reflection commutes with some rotation. Therefore $\langle r^is, r^js \rangle$ is nilpotent if and only if $i = j$. Then $\Gamma_{\frakN}(D_n)$ has one component with $n-1$ vertices and $n$ components with one element. That is, $\kappa(G) = n+1$.\\
    
    \item   From Theorem \ref{GraphIsomorphism} follows that $\kappa(D_n) = \kappa(D_n/Z^*(D_n))$. Thus, by Example \ref{NilpotentizerDihedral}, $\kappa(D_n/Z^*(D_n)) = \kappa(D_m)$. Since $m$ is an odd number, by part $1$, $\kappa(D_n) = \kappa(D_m) = m + 1$.
\end{enumerate}

\end{proof}


\begin{example}\label{exnilDn}
If $n=5$ or $n=12=2^{2}\cdot m$, the nilpotent graph of $D_n$ has $n+1=6$ or $m+1=3+1=4$ connected components (by Examples \ref{nilD5} and \ref{nilD12}), respectively, which illustrates Theorem \ref{kDn}.
\end{example}

\section{Nilpotent graph of some projective special linear groups}
\label{Section6}
If $C$ is a connected component of the nilpotent graph of $G$; in this section, necessary and sufficient conditions for the union of $C$ and the hypercenter of $G$ to contain one of its Sylow $p$-subgroups are studied. If $q$ is a power of a prime number, a formula for the number of connected components of the nilpotent graph of $\psl(2,q)$ is presented. Finally, it is proved that the nilpotent graph of a group with nilpotentizer of even order is never Eulerian.

It is easy to see that  $G$ acts by conjugation on $\mathcal{C}(G)$  as follows $g\cdot C := gCg^{-1}$. Lemma \ref{ComponentNormalizer} shows that $C \subseteq N_G(C)$. 

\begin{lemma}
\label{ComponentNormalizer}
Every connected component of $\Gamma_{\frakN}(G)$ is invariant under conjugation by its element.

\end{lemma}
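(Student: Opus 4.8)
The plan is to show that conjugation by any group element acts as a graph automorphism of $\Gamma_{\frakN}(G)$, and then to exploit the elementary fact that an automorphism fixing a vertex must fix the whole connected component containing that vertex.

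First I would verify that for each $g \in G$ the map $\phi_g : x \mapsto gxg^{-1}$ is an automorphism of $\Gamma_{\frakN}(G)$. This requires checking two things. On the level of vertices, since $G$ is finite we have $\nil(G) = Z^*(G)$, which is characteristic in $G$; hence $gxg^{-1} \in \nil(G)$ if and only if $x \in \nil(G)$, so $\phi_g$ restricts to a bijection of $G - \nil(G)$ onto itself. On the level of edges, for any $x, y$ one has $\langle gxg^{-1}, gyg^{-1}\rangle = g\langle x, y\rangle g^{-1}$, and conjugation is a group isomorphism, so $\langle x, y\rangle$ is nilpotent if and only if $\langle gxg^{-1}, gyg^{-1}\rangle$ is nilpotent. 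Thus $\phi_g$ preserves adjacency in both directions and is a graph automorphism. Since a graph automorphism carries connected components bijectively onto connected components, $gCg^{-1} = \phi_g(C)$ is again a connected component for every $C \in \mathcal{C}(G)$; this also confirms that the conjugation action of $G$ on $\mathcal{C}(G)$ is well-defined.

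Finally, I would specialize to $g = x$ for an arbitrary element $x \in C$. Because $x \in C$, the image $xCx^{-1}$ contains the element $xxx^{-1} = x$, so $xCx^{-1}$ is a connected component containing $x$. But $C$ is also a connected component containing $x$, and distinct connected components are disjoint, whence $xCx^{-1} = C$. As this holds for every $x \in C$, each element of $C$ normalizes $C$, that is $C \subseteq N_G(C)$, and in particular $C$ is invariant under conjugation by any of its own elements.

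There is no serious obstacle in this argument; the only point demanding care is the claim that $\nil(G)$ is preserved by conjugation, so that vertices are sent to vertices. This follows at once from the identification $\nil(G) = Z^*(G)$, valid for finite $G$, together with the fact that the hypercenter is characteristic.
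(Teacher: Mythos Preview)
Your proof is correct. Both your argument and the paper's rest on the same key observation, namely that conjugation by any $g\in G$ sends $\langle x,y\rangle$ to the isomorphic subgroup $g\langle x,y\rangle g^{-1}$ and hence preserves nilpotency; the difference is only in packaging. The paper works directly: given $g\in C$ and $y=g^{-1}xg$ with $x\in C$, it takes a path $g,h_1,\ldots,h_n,x$ in $C$, conjugates each edge, and obtains a path $g,g^{-1}h_1g,\ldots,g^{-1}h_ng,y$ from $g$ to $y$, whence $g^{-1}Cg\subseteq C$ and equality by cardinality. You instead phrase the same computation as ``$\phi_g$ is a graph automorphism'' and then invoke the general principle that an automorphism fixing a vertex fixes its component. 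Your route has the small bonus of simultaneously justifying that $G$ acts on $\mathcal{C}(G)$ (which the paper asserts just before the lemma), while the paper's hands-on argument avoids the slight overhead of checking that $\nil(G)$ is conjugation-invariant; incidentally, that last point follows directly from the definition of $\nil(G)$ without needing $\nil(G)=Z^\ast(G)$.
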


\begin{proof}
Let $g \in C$, $y \in g^{-1}Cg$ be arbitrary. Then there exist $x, h_1, \ldots, h_n \in C$ such that $y = g^{-1}xg$ and $\langle g, h_1 \rangle, \langle h_1, h_2 \rangle,\ldots, \langle h_n, x\rangle$ are nilpotent subgroups of $G$, because $C\in \mathcal{C}(G)$. Then, 
$\langle g, g^{-1}h_1g \rangle$, $\langle g^{-1}h_1g, g^{-1}h_2g \rangle,\ldots,$ $\langle g^{-1}h_ng, g^{-1}xg \rangle = \langle g^{-1}h_ng, y\rangle$ are nilpotent subgroups of $G$. That is, a path exists from $g$ to $y$ so that $y \in C$. Therefore $g^{-1}Cg \subseteq C$ and since $|g^{-1}Cg| = |C|$, then $g^{-1}Cg = C$.
\end{proof}

\begin{remark}
Note that $\stab_G(C) = N_G(C) = N_G(C\cup Z^*(G))$ for all $C \in \mathcal{C}(G)$, so that $\stab_G(C)$ acts by conjugation on $C \cup Z^*(G)$. Moreover, $\stab_{N_G(C)}(g) = C_{N_G(C)}(g)$ for all $g \in C \cup Z^*(G)$. 
\end{remark}

If $C \in \mathcal{C}(G)$, Theorem \ref{psylow-component} provides necessary and sufficient conditions for the set $C \cup Z^*(G)$ to contain a Sylow $p$-subgroup of $G$.   

\begin{theorem}\label{psylow-component}
Let $C \in \mathcal{C}(G)$, and $p$ be a prime divisor of $|G|$.

\begin{enumerate}
    \item If $P$ is a Sylow $p$-subgroup of $G$ contained in $C \cup Z^*(G)$, then  $p \mid |C \cup Z^*(G)|$.\\

    \item If $p\mid gcd(|C|+1,|N_G(C)|)$ and $Z^*(G) = 1$, then $C\cup 1$ contains a Sylow $p$-subgroup of $G$.
\end{enumerate}

\end{theorem}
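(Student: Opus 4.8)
The plan is to prove both parts by letting a suitable $p$-group act by conjugation on $X := C\cup Z^*(G)$ and reading the conclusion off the class equation modulo $p$. A preliminary step needed throughout is the containment $C\cup Z^*(G)\subseteq N_G(C)$: we already have $C\subseteq N_G(C)$ by Lemma \ref{ComponentNormalizer}, and if $z\in Z^*(G)=\nil(G)$ and $g\in C$, then $\langle g, zgz^{-1}\rangle\leq \langle z,g\rangle$ is nilpotent (because $z\in\nil(G)$), so $zgz^{-1}$ is adjacent to $g$ and hence lies in $C$; thus $z\in N_G(C)$.

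For part (1), the containment above gives $P\leq N_G(C)$, so $P$ acts by conjugation on $X$ and $|X|\equiv |X\cap C_G(P)|\pmod p$. The crux is to show $C_G(P)\subseteq X$ in the generic case $P\nsubseteq Z^*(G)$: fixing any $w\in P\cap C$ (which is nonempty and avoids the identity since $C\subseteq G-\nil(G)$), every $1\neq x\in C_G(P)$ commutes with $w$, so $\langle x,w\rangle$ is abelian, whence either $x\in Z^*(G)$ or $x$ is adjacent to $w\in C$ and so $x\in C$; either way $x\in X$. Thus $X\cap C_G(P)=C_G(P)$, and since the nontrivial $p$-group $Z(P)$ lies in $C_G(P)$ we get $p\mid |C_G(P)|$, giving $p\mid |X|$. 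The degenerate case $P\subseteq Z^*(G)$ I would dispose of using the divisibility results of Section \ref{Section4}: by Proposition \ref{NilDivisor} and Theorem \ref{GraphIsomorphism}, $|Z^*(G)|$ divides $|C|$, while $P\subseteq Z^*(G)$ forces $p\mid |Z^*(G)|$, so $p\mid |C|+|Z^*(G)|=|X|$.

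For part (2) we have $Z^*(G)=\nil(G)=1$, so $X=C\cup\{1\}$ and $|X|=|C|+1$. Since $p\mid |N_G(C)|$, choose a nontrivial Sylow $p$-subgroup $Q$ of $N_G(C)$; as $Q\leq N_G(C)=N_G(C\cup Z^*(G))$, it acts by conjugation on $X$. Because $p\mid |C|+1=|X|$, the class equation yields $p\mid |X\cap C_G(Q)|$, and since $1$ is a fixed point there is some $1\neq x\in C\cap C_G(Q)$. Now for each $1\neq q\in Q$ the elements $x$ and $q$ commute, so $\langle x,q\rangle$ is abelian and hence nilpotent, making $q$ adjacent to $x\in C$; therefore $Q-1\subseteq C$. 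Finally I would enlarge $Q$ to a Sylow $p$-subgroup $R$ of $G$: by Lemma \ref{nilclique}, $R-1$ is a clique and so lies in a single component, which must be $C$ because it already contains $\emptyset\neq Q-1\subseteq C$. Hence $R-1\subseteq C$, that is $R\subseteq C\cup\{1\}=C\cup Z^*(G)$.

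The main obstacle I anticipate is in part (1): pinning down the fixed-point set $X\cap C_G(P)$ exactly, i.e. showing that centralizing $P$ already forces membership in $X$, and cleanly eliminating the degenerate possibility $P\subseteq Z^*(G)$ via the results of Section \ref{Section4}. Part (2), by contrast, is driven by a single well-chosen fixed point together with the clique structure of Sylow subgroups, and should go through with little friction.
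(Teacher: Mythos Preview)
Your proof is correct and takes the same approach as the paper: in both parts a suitable $p$-subgroup acts by conjugation on $C\cup Z^*(G)$ and the conclusion is read off fixed-point counting modulo $p$, with only cosmetic differences (the paper acts with a cyclic group $\langle x\rangle$ of order $p$ rather than all of $P$ or a Sylow $p$-subgroup of $N_G(C)$, splits Part~(1) along $P\cap Z^*(G)=1$ versus $P\cap Z^*(G)\neq 1$, and runs Part~(2) by contradiction via Burnside's lemma). One small plus of your write-up is that the final step of Part~(2)---extending the $p$-element found inside $C$ to a full Sylow $p$-subgroup of $G$ via the clique structure of Lemma~\ref{nilclique}---is made explicit, whereas the paper's phrase ``$y\in C\cup 1$, which is a contradiction'' tacitly relies on the same observation.
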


\begin{proof}

\begin{enumerate}
    \item  If $P \cap Z^*(G) \neq 1$, then $Z^*(G)$ contains an element of order $p$, i.e. $|Z^*(G)|$ is divisible by $p$. Since $|Z^*(G)| \mid |C|$, by Lemma \ref{ZetaDivisor}, then 
$|C\cup Z^*(G)| = |C| + |Z^*(G)| = |Z^*(G)|m = pkm$, for some integer numbers $k$ and $m$. 
Suppose now  that $P\cap Z^*(G) = 1$. Since $P \subseteq C \cup Z^*(G)$, there exists $x \in C$ such that $\ord(x) = p$. Thus $x \in N_G(C)$ (by Lemma \ref{ComponentNormalizer}) and so $\langle x \rangle$ acts by conjugation on the elements of $C \cup Z^*(G)$. 
Note that, for $j \in \{1, \ldots, p-1\}$ and $g\in G$, $g^{x^j} = g$ if and only if $g \in C_G(x)$. In addition, if $g \in C_G(x)$, then $\langle x, g \rangle $ is nilpotent, so that $C_G(x) \subseteq C\cup Z^*(G)$. 
Thus
\[
p = \ord(x) = |\orb_x(g)| \ |\stab_x(g)|,
\] 
for all $g \in C \cup Z^*(G)$. Then $|\orb_x(g)| = p$ or $|\orb_x(g)| = 1$ for all $g \in C \cup Z^*(G)$. Thus, since $|\orb_x(g)| = 1$ if and only if $g \in C_G(x)$, 

\begin{eqnarray*}
|C\cup Z^*(G)|  &=& \sum_{i=1}^n |\orb_x(g_i)| =|C_G(x)| + pn_2     
\end{eqnarray*}

where $n$ and $n_2$ are the number of orbits, and the number of orbits of size $p$, respectively. Hence, since $\langle x \rangle \leq C_G(x)$, $p  \mid |C\cup Z^*(G)|$.\\

\item Suppose that no Sylow $p$-subgroup of $G$ is contained in $C\cup 1$. Since $p \mid |N_G(C)|$, then there exists an $x \in N_G(C)$ such that $\ord(x) = p$ and so $\langle x \rangle$ acts on $C\cup 1$ by conjugation. Then, $g^y \neq g$ for all $1\neq y \in \langle x \rangle$, and $g \in C$; otherwise, $\langle y, g\rangle$ is nilpotent, and so $y \in C \cup 1$, which is a contradiction. Thus, $y \in \langle x \rangle$ stabilizes $g \in C \cup 1$ if and only if $g = 1$ or $y = 1$. Hence, by the Burnside's Lemma, 

\begin{eqnarray*}
p \cdot |(C \cup 1)/\langle x \rangle| &=& \ord(x) \cdot|(C \cup 1)/\langle x \rangle|=\sum_{y \in \langle x \rangle}|\fix(y)|\\
                                 &=& |C \cup 1| + (\ord(x)-1)=|C|+p. 
\end{eqnarray*}

Thus $p\mid |C|$, which is a contradiction. Therefore, a Sylow $p$-subgroup of $G$ must be contained in $C \cup 1$.

\end{enumerate}
\end{proof}

For the rest of this section, $G$ denotes the projective special linear group $\psl(2,q)$,  where $q = p^f$ for some prime number $p$. In addition, $k := \gcd(q-1,2)$, $r :=\frac{q-1}{k}$, and $t :=\frac{q+1}{k}$.

\begin{theorem}\cite[Chapter II, S\"atze 8.2, 8.3, 8.4]{Huppert} \label{8.2}
\begin{enumerate}
\item Let $P$ be a Sylow $p$-subgroup of $G$. Then $P$ is isomorphic to the additive group of the finite Field $\F_q$, an elementary abelian group of order $q$. Further, $G$ has exactly $q+1$ Sylow $p$-subgroups. 
\item  The subgroup $U$ of $G$, which fixes the values $0$ and $\infty$, is cyclic of order $r$.
\item  $G$ has a cyclic subgroup $S$ of order $t$.
\end{enumerate}
\end{theorem}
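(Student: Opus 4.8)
The plan is to realise $G=\psl(2,q)$ as $\sl(2,q)/Z$ with $Z=\{\pm I\}$ of order $k=\gcd(q-1,2)$, so that $|G|=q(q-1)(q+1)/k$, and to exploit the faithful action of $G$ on the projective line $\P^1(\F_q)=\F_q\cup\{\infty\}$, a set of $q+1$ points, by M\"obius transformations $z\mapsto\frac{az+b}{cz+d}$. Since $q=p^f$ one has $q^2-1\equiv-1\pmod p$, so the $p$-part of $|G|$ is exactly $q$. For part (1), I would first exhibit a Sylow $p$-subgroup explicitly: the unipotent subgroup given by the images of $\left(\begin{smallmatrix}1&a\\0&1\end{smallmatrix}\right)$ with $a\in\F_q$ is isomorphic to $(\F_q,+)$, hence elementary abelian of order $q=|G|_p$, and it meets $Z$ trivially, so its image $P\leq G$ again has order $q$. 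For the count I would pass to the action on $\P^1(\F_q)$. The unipotent map $x\mapsto x+a$ has no finite fixed point when $a\neq0$, so $P$ fixes the single point $\infty$; as all Sylow $p$-subgroups are conjugate, each fixes exactly one point. Conversely, because $q+1\equiv1\pmod p$ any $p$-group acting on the $q+1$ points has a fixed point, and the stabiliser $B=G_\infty$ (the image of the upper-triangular matrices) satisfies $P\trianglelefteq B$ with $[B:P]=(q-1)/k$ prime to $p$, so $P$ is the \emph{unique} Sylow $p$-subgroup contained in $B$. Hence $C\mapsto\fix(C)$ is a $G$-equivariant bijection between the Sylow $p$-subgroups and the points of $\P^1(\F_q)$, giving $n_p=q+1$.

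For part (2), I would identify the pointwise stabiliser of $\{0,\infty\}$ with the split (diagonal) torus, namely the images of $\left(\begin{smallmatrix}t&0\\0&t^{-1}\end{smallmatrix}\right)$, $t\in\F_q^{*}$. The assignment $t\mapsto\left(\begin{smallmatrix}t&0\\0&t^{-1}\end{smallmatrix}\right)$ is an injective homomorphism $\F_q^{*}\to\sl(2,q)$, so this torus is cyclic of order $q-1$; its intersection with $Z$ has order $k$, whence its image $U\leq G$ is cyclic of order $r=(q-1)/k$. For part (3), I would construct the non-split torus: regarding $\F_{q^2}$ as a two-dimensional $\F_q$-vector space, multiplication by $\lambda\in\F_{q^2}^{*}$ is an $\F_q$-linear map of determinant $N_{\F_{q^2}/\F_q}(\lambda)$, which gives an embedding $\F_{q^2}^{*}\hookrightarrow\gl(2,q)$. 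The norm-one elements form a cyclic subgroup of $\F_{q^2}^{*}$ of order $(q^2-1)/(q-1)=q+1$ that lands inside $\sl(2,q)$; its intersection with $Z$ again has order $k$, so its image $S\leq G$ is cyclic of order $t=(q+1)/k$.

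I expect the main obstacle to be the exact enumeration $n_p=q+1$ in part (1). Producing the explicit Sylow subgroup and the congruence $n_p\equiv1\pmod p$ is immediate, but pinning down the precise value requires the bijection with $\P^1(\F_q)$, which in turn rests on the two facts that $P$ has a single fixed point and that each point stabiliser contains a unique Sylow $p$-subgroup; the uniqueness hinges on the normality $P\trianglelefteq B$ together with $p\nmid[B:P]$. By contrast, parts (2) and (3) reduce to writing down the split and non-split tori and computing their orders modulo $Z$, which are routine once the $\F_{q^2}$-multiplication embedding in part (3) is in place.
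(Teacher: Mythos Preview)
The paper does not supply its own proof of this theorem: it is quoted verbatim as a classical result from Huppert's \textit{Endliche Gruppen I} (Chapter II, S\"atze 8.2--8.4) and used as a black box in the subsequent analysis of $\psl(2,q)$. So there is no in-paper argument to compare against.

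That said, your sketch is correct and is essentially the standard argument one finds in Huppert and elsewhere. The identification of $P$ with the unipotent upper-triangular group, of $U$ with the split (diagonal) torus, and of $S$ with the non-split torus coming from the norm-one elements of $\F_{q^2}^{*}$ is exactly the classical description. Your count $n_p=q+1$ via the $G$-equivariant bijection $C\mapsto\fix(C)$ between Sylow $p$-subgroups and points of $\P^{1}(\F_q)$ is the clean way to do it; the two ingredients you single out (each Sylow $p$-subgroup fixes a unique point, and each Borel $B=G_\infty$ contains a unique Sylow $p$-subgroup because $P\trianglelefteq B$ with $p\nmid[B:P]$) are precisely what is needed, and both hold. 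The only cosmetic point is that in part (3) you might make explicit that $-1\in\F_{q^2}^{*}$ has norm $(-1)^{q+1}=1$ when $q$ is odd, so the scalar $-I$ genuinely lies in the embedded norm-one torus, giving the claimed intersection of order $k$ with $Z$.
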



 

\begin{theorem}\cite[Chapter II, Satz 8.5]{Huppert} \label{8.5}\\
Let $P,U,S$ be as in Theorem \ref{8.2}, and
\[
\mathcal{P} := \{P^x, U^x, S^x : x\in G\}.
\]
Then, each element of $G$ different from 1 lies in exactly one of the groups from $\mathcal{P}$ (i.e., $\mathcal{P}$ forms a partition of $G$).
\end{theorem}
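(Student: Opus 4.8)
The plan is to prove this classical partition theorem by exploiting the natural action of $G=\psl(2,q)$ on the projective line $\P^1(\F_q)$, which has $q+1$ points, and sorting each nontrivial element by its fixed points. First I would lift a nonidentity element $g\in G$ to a matrix $A\in\sl(2,q)$, whose characteristic polynomial is $X^2-\mathrm{tr}(A)X+1$. The eigenvalues place $g$ into exactly one of three mutually exclusive types: a repeated eigenvalue (so $g$ is unipotent and fixes exactly one point of $\P^1(\F_q)$); two distinct eigenvalues in $\F_q^\times$ (so $g$ is split semisimple and fixes exactly two points); or a conjugate pair of eigenvalues in $\F_{q^2}\setminus\F_q$ (so $g$ is non-split semisimple and fixes no point). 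Since the number of fixed points is invariant under conjugation, these three types are disjoint and each is a union of conjugacy classes.

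The second step is to show that each type lands in a conjugate of $P$, $U$, or $S$ respectively, giving $\bigcup\mathcal{P}=G$. A unipotent element lies in the Sylow $p$-subgroup consisting of the unipotents fixing its unique fixed point, and transporting by a suitable $x\in G$ puts it into some $P^x$. A split semisimple element is simultaneously diagonalizable over $\F_q$ and hence conjugate into the diagonal torus $U$; a non-split one is diagonalizable only over $\F_{q^2}$ and is conjugate into the non-split torus $S$. Here Theorem \ref{8.2} supplies the orders $|P|=q$, $|U|=r$, $|S|=t$ and the fact that $G$ has exactly $q+1$ Sylow $p$-subgroups.

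Third, I would establish the trivial-intersection (TI) property within each family: two distinct conjugates of $P$ (respectively $U$, $S$) meet only in the identity, and the three families share no nonidentity element since type is a conjugation invariant. For $P$ this holds because a Sylow $p$-subgroup is precisely the set of unipotents fixing a prescribed point, and distinct points give subgroups overlapping only in $1$. For $U$ and $S$ the crucial input is that a semisimple element with two distinct eigenvalues is centralized exactly by its ambient torus, so its (split or non-split) fixed-point pair is determined by the element; consequently $N_G(U)$ and $N_G(S)$ are dihedral of orders $2r$ and $2t$.

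Finally I would clinch exactness by counting. The number of conjugates of $U$ is $|G|/|N_G(U)|=|G|/(2r)=q(q+1)/2$, of $S$ is $|G|/(2t)=q(q-1)/2$, and there are $q+1$ conjugates of $P$. Multiplying each count by the nonidentity elements it contributes under the TI property ($q-1$, $r-1$, $t-1$) and summing, one checks using $|G|=q(q^2-1)/k$, $r=(q-1)/k$, $t=(q+1)/k$ that the total is exactly $|G|-1$, forcing every nonidentity element to lie in precisely one member of $\mathcal{P}$. The main obstacle is the third step: rigorously pinning down $N_G(U)$ and $N_G(S)$ — equivalently, showing a split (respectively non-split) torus is self-centralizing with dihedral normalizer — since the exactness of the whole partition rests on the normalizer orders $2r$ and $2t$ feeding the final count.
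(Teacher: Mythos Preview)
The paper does not give its own proof of this statement: Theorem~\ref{8.5} is quoted from Huppert \cite[Chapter II, Satz 8.5]{Huppert} and used as a black box, so there is nothing in the paper to compare your argument against.

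That said, your sketch is the standard proof and is correct. The trichotomy via the characteristic polynomial of a lift $A\in\sl(2,q)$ is well-defined on $\psl(2,q)$ (the type is unchanged under $A\mapsto -A$), and the fixed-point interpretation on $\P^1(\F_q)$ cleanly separates the three cases and yields the TI property for the conjugates of $P$ and $U$. Your counting check is exactly the right way to close the argument, and it goes through in both parities of $q$; one should only note that for $q\in\{2,3\}$ the split torus $U$ is trivial and drops out harmlessly. You have also identified the genuine technical point: the computation $N_G(U)\cong D_r$ and $N_G(S)\cong D_t$ (equivalently, that each torus is self-centralizing with an order-$2$ Weyl group). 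For $U$ this follows from the fixed-point description you already gave, since any $g$ normalizing $U$ must permute the common fixed set $\{0,\infty\}$; for $S$ the cleanest route is to realize $S$ as the image of the norm-$1$ subgroup of $\F_{q^2}^\times$ acting on $\F_{q^2}\cong\F_q^2$ and argue identically with the two $\F_{q^2}$-eigenlines, which are defined over $\F_{q^2}$ and swapped by the $q$-Frobenius. Once those normalizer orders are in hand, your enumeration gives exactly $|G|-1$ and the partition is established.
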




Thus, by Theorem \ref{8.5},  every element of $G$ different from $1$ belongs to only one of the conjugacy classes of the subgroups $P$, $U$, $S$, described in Theorem \ref{8.2}. Moreover, $N_G(P) \cong \Z_p^f \rtimes \Z_r$, $N_G(U) \cong D_r$ y $N_G(S) \cong D_t$.\\

Let $C_P$, $C_U$, $C_S$ denote the connected component of $\Gamma_{\frakN}(G)$ containing elements of $P$, $U$ and $S$, respectively.

\begin{theorem}\label{OrbitsGPSU}
The following statements hold:
\begin{enumerate}
    \item If $q>3$, then $|\mathcal{C}(G)/G| = 3$.
    \item  $|\orb_G(C_P)| = q+1$. 
    \item  If $q \geq 13$, then $|\orb_G(C_U)| = 1$ if and only if $q \equiv 1 \bmod 4$.
    \item If $q > 3$ and $q \neq 7, 9$, then $|\orb_G(C_S)| = 1$ if and only if $q \equiv 3 \bmod 4$.
\end{enumerate}
\end{theorem}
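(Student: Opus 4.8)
\emph{Strategy.} The plan is to identify, up to conjugacy, every connected component of $\Gamma_\frakN(G)$ with one of $C_P$, $C_U$, $C_S$, and then to compute each orbit length through the orbit--stabilizer theorem together with the identity $\stab_G(C)=N_G(C)$ recorded in the preceding Remark. The backbone of the whole argument is the partition in Theorem \ref{8.5}: every non-identity element of $G$ lies in exactly one member of $\{P^x,U^x,S^x\}$, so I will speak of an element being of \emph{$p$-type}, \emph{$U$-type}, or \emph{$S$-type}. Since $q=p^f$ gives $p\nmid q-1$ and $p\nmid q+1$, one has $p\nmid r$ and $p\nmid t$, and since $\gcd(q-1,q+1)=2$ one gets $\gcd(r,t)=1$; thus the three types carry pairwise ``incompatible'' element orders, a fact I will use repeatedly to forbid edges between distinct types.

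\emph{The regular (torus/unipotent) part.} First I would compute nilpotentizers of regular elements. For $x\in P-1$ the partition makes $P$ a TI-subgroup, so $C_G(x)\leq N_G(P)\cong\Z_p^f\rtimes\Z_r$; as the complement $\Z_r$ acts fixed-point-freely on $P$, this forces $C_G(x)=P$. If $g\in\nil_G(x)$, then in the nilpotent group $\langle x,g\rangle$ the $p'$-part of $g$ commutes with $x$ (Theorem \ref{NilpotentEquivalence}(7)), hence lies in $C_G(x)=P$ and is trivial; so $g$ is a $p$-element, and two $p$-elements generating a nilpotent (hence $p$-) group lie in a common Sylow $p$-subgroup, giving $g\in P$. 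Therefore $\nil_G(x)=P$ and $C_P=P-1$, whence $\stab_G(C_P)=N_G(C_P)=N_G(P)$ and, by Theorem \ref{8.2}, $|\orb_G(C_P)|=|G:N_G(P)|=q+1$, which is Part (2). The same TI-plus-inversion reasoning (reflections in $N_G(U)\cong D_r$, $N_G(S)\cong D_t$ invert the torus and a nilpotent subgroup containing a regular semisimple element is cyclic) shows that for a \emph{regular} $x\in U$ (resp.\ $x\in S$) one has $\nil_G(x)=C_G(x)=U$ (resp.\ $=S$). Combined with the incompatibility of orders between types (a $p$-element never commutes with a $p'$-element, and a $U$-type never commutes with an $S$-type because their orders are coprime and the tori are non-conjugate), this yields Part (1): every component is conjugate to exactly one of $C_P,C_U,C_S$, and these three are pairwise non-conjugate since conjugation preserves orders and, for $q>3$, the order-sets attached to $p$-, $U$-, and $S$-type are genuinely different; hence $|\mathcal{C}(G)/G|=3$.

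\emph{The involution part.} The congruences $q\equiv 1,3\pmod 4$ enter only through which torus carries the unique class of involutions: $r=(q-1)/k$ is even exactly when $q\equiv 1\pmod 4$, and $t=(q+1)/k$ is even exactly when $q\equiv 3\pmod 4$ (for $q$ even both are odd). When the relevant torus has \emph{odd} order it contains no involution, the computation of the previous paragraph applies to every non-identity element of it, and the component is exactly $U-1$ (resp.\ $S-1$); then $\stab_G(C_U)=N_G(U)\cong D_r$ gives $|\orb_G(C_U)|=|G|/2r=q(q+1)/2>1$, so this branch forces $|\orb_G(\cdot)|\neq 1$ and establishes the ``only if'' directions. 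When the torus has \emph{even} order it contains the central involution $z$, which lies in $Z(C_G(z))$ with $C_G(z)=N_G(U)\cong D_r$ (resp.\ $D_t$); since $z$ commutes with all of $C_G(z)$, it is adjacent to every element there, in particular to many further involutions. If I can show that \emph{all} involutions lie in a single component $C$, then $C$ contains the full conjugacy class $I$ of involutions; as $I$ is $G$-invariant, every conjugate $C^g$ also meets $I$, forcing $C^g=C$, i.e.\ $N_G(C)=G$ and $|\orb_G(C)|=1$. This gives the ``if'' directions and completes Parts (3) and (4).

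\emph{Main obstacle.} The crux is precisely the connectivity claim that all involutions lie in one component, equivalently that the graph on $I$ with $a\sim b\iff\langle a,b\rangle$ is a $2$-group (i.e.\ $\ord(ab)$ is a power of $2$) is connected. Involutions sharing a Sylow $2$-subgroup form a clique, so I would reduce connectivity to the intersection pattern of the (dihedral) Sylow $2$-subgroups of $\psl(2,q)$; the only genuinely delicate case is when $\ord(ab)$ is odd, where no common commuting involution sits inside $\langle a,b\rangle$ and one must route through the centralizer structure. This is exactly where the hypotheses $q\geq 13$ and $q\neq 7,9$ are needed, excluding the small groups (such as $A_5$, $S_4$, and the sporadic small linear groups) for which the involution graph degenerates; I expect to dispatch these bounds by invoking the known structure of the commuting involution graph of $\psl(2,q)$ rather than by a hands-on path construction.
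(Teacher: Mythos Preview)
Your treatment of Parts (1) and (2), and of the ``only if'' directions in (3) and (4), follows essentially the same line as the paper: separate the three element-types via the partition and coprimality of orders, identify $C_P$ with $P-1$ by showing $\nil_G(x)=P$ for $x\in P-1$, and when the relevant torus has odd order conclude that the component is exactly the torus minus $1$.

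The gap is in the ``if'' directions of (3) and (4). You have correctly identified the connectivity of the involution class as sufficient, but you then declare it the \emph{main obstacle} and leave it unresolved, hoping to invoke external results about the commuting involution graph of $\psl(2,q)$. The paper bypasses this obstacle entirely with a much shorter argument that you are missing. Suppose $q\equiv 1\pmod 4$. The central involution $z$ of $N_G(U)\cong D_{(q-1)/2}$ commutes with some $x\in U-1$, and by Part (1) this $z$ lies in some conjugate $U^g\neq U$; hence $C_U$ contains non-identity elements of both $U$ and $U^g$. By Lemma \ref{ComponentNormalizer} this forces $N_G(C_U)$ to contain both $N_G(U)$ and $N_G(U^g)$, so $N_G(U)\lneq N_G(C_U)$. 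Now the key point: for $q\geq 13$ the dihedral normalizer $N_G(U)$ is a \emph{maximal} subgroup of $G$ (cited from \cite{Maximal}), so $N_G(C_U)=G$ and $|\orb_G(C_U)|=1$. The same maximality argument, with $N_G(S)$ maximal for $q>3$, $q\neq 7,9$, handles Part (4).

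In particular, you have misread the role of the hypotheses $q\geq 13$ and $q\notin\{7,9\}$: they are not there because the involution graph degenerates for small $q$, but because they are exactly the ranges in which $N_G(U)$ and $N_G(S)$, respectively, are maximal subgroups. One edge between two conjugate tori, plus maximality, replaces your entire connectivity program.
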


\begin{proof}

\begin{enumerate}
    \item Let $1\neq x \in G$ and suppose that $\langle x, y\rangle$ is nilpotent for some $y \in G - Z^*(G)$. If $x \in P$, then $y\in P^g$ for some $g\in G$. Otherwise, $y\in U^h$ or $y\in S^z$, for some $h, z\in G$, so that $x$ and $y$ commute, because their orders are relative primes. However, $xy$ must be in some conjugate of $P$, $U$, or $S$, so the order of one of these groups must be divisible by $p$ and by some other prime that divides $r$ or $t$, which is false. Therefore $y \in$ $P$. Analogously, if $x \in U$, then $y$ must be in some conjugate of $U$, and if $x \in S$, then $y$ must be in some conjugate of $S$. Therefore, $C_U, C_S \notin \orb_G(C_P)$ and $C_U \notin \orb_G(C_S)$, and since each orbit contains components with elements of one of these groups, then $|\mathcal{C}(G)/G| = 3$.\\

    \item Suppose $|\orb_G(C_P)| \neq q+1$. Necessarily $|\orb_G(C_P)| < q+1$, since $|\orb_G(C_P)$, by part $1$, contains only components with elements in the conjugates of $P$. Then, there must exist $x \in P$ and $y$ in some conjugate of $P$, say $P'$, such that $\langle x, y \rangle$ is nilpotent. Consequently there exists $z \neq 1$ such that $z \in Z(\langle x, y \rangle)$, so that $z \in N_G(P) \cap N_G(P')$, since $x \in P^z\cap P$, $y \in P'^z \cap P'$ and $x \neq 1 \neq y$. Moreover, $\langle x, z\rangle$ is also nilpotent, so $z$ must be in some conjugate of $P$. Since $P$ is elementary abelian, then $|z| = p$, so, $z \in P \cap P'$, since the only subgroups of order divisible by $p$ of $N_G(P)$ and $N_G(P')$ are $P$ and $P'$. But this is impossible since $P \cap P' = 1$. Therefore, $C_P = P- 1$ so $|\orb_G(C_P)| = |G : N_G(P)| = q+1$.\\

    \item Suppose $|\orb_G(C_U)| = 1$. Then $C_U$ has elements of $U$ and some conjugate of $U$, say $U'$. That is, there exist $x \in U$, $y \in U'$, $x \neq 1 \neq y$ such that $\langle x, y\rangle$ is nilpotent. Thus, there exists $z \in \langle x, y\rangle$ such that $z \neq 1$ and commutes with $x$ and $y$. Therefore, $z \in N_G(U) \cap N_G(U')$, since $x \in U^z\cap U$, $y \in U'^z\cap U'$ and $x \neq 1 \neq y$.
    On the other hand, if $z \in U$, then $z \notin U'$, since $U \cap U' = 1$ and $z \neq 1$, so that $\ord(z) = 2$, due to $z \in N_G(U') \cong D_{\frac{q-1}{k}}$. Analogously, if $z \in U'$ o $z \notin U \cup U'$,  $\ord(z) = 2$. Since $z$ must also be in some conjugate of $U$ by part $1$, it follows that $2 = \ord(z)$ divides $|U| = \frac{q-1}{k}$, so $k = 2$ and hence $q \equiv 1 \bmod 4$. Conversely, suppose that $q \equiv 1 \bmod 4$. Then $k = 2$ and $\frac{q-1}{2}$ is a even number. Note that $N_G(U) \leq N_G(C_U)$, and since $N_G(U) \cong D_{\frac{q-1}{2}}$ and $\frac{q-1}{2}$ is an even number, $Z(N_G(U)) \neq 1$, so there exists $z \notin U$ such that $z$ commutes with some element $x \in U$. Thus $\langle x, z\rangle$ is nilpotent, and by part $1$, $z$ must belong to some conjugate of $U$. Then $U^g-1 \subseteq C_U$, for some $g \in G - N_G(U)$, which implies, by  Lemma \ref{ComponentNormalizer}, that $N_G(C_U)$ contains at least two cyclic groups of order $\frac{q-1}{2}$. Hence $N_G(U) < N_G(C_U)$. In addition, since $q \geq 13$,  $N_G(U)$ is a maximal subgroup (by \cite[Table 5]{Maximal}), so that  $N_G(C_U) = G$ and $|\orb_G(C_U)| = |G : N_G(C_U)| = 1$.\\

    \item Similarly to the previous proof, we use the fact that $N_G(S)$ is maximal when  $q \neq 7, 9$ \cite[Table 5]{Maximal}.
    
\end{enumerate}
\end{proof}

\begin{table}[H]
\label{ccPSL}
\begin{center}
\begin{tabular}{|c|c|c|c|c|c|c|c|c|} 
 \hline
 $q$ & 2 & 3 & 4 & 5 & 7 & 8 & 9 & 11 \\ [1ex] 
 \hline 
 $\kappa(G)$ & 4 & 5 & 21 & 21 & 37 & 73 & 47 & 79\\ [1ex] 
 \hline
\end{tabular}
\caption{$\kappa(\psl(2,q))$ for $q \leq 11$.}\label{tab1}
\end{center}
\end{table}

\begin{theorem}
Let $q \neq 2, 3, 5$. Then 
\[
\kappa(G) =  \begin{cases}
    q^2 + q + 1 &  \text{if} \ q \equiv 0 \bmod 4\\
    q + \frac{(q-1)q}{2} + 2 &  \text{if} \ q \equiv 1 \bmod 4\\
     q + \frac{(q+1)q}{2} + 2 &   \text{if} \ q \equiv 3 \bmod 4
\end{cases}    
\]
\end{theorem}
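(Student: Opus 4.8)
The plan is to compute $\kappa(G)$ by counting, within each of the three $G$-orbits of connected components identified in Theorem \ref{OrbitsGPSU}, exactly how many components there are, and then summing. Since $Z^\ast(G)=\nil(G)=1$ for $G=\psl(2,q)$ with $q\neq 2,3$ (these groups are nonabelian simple), every nonidentity element is a vertex, and by Theorem \ref{8.5} the vertex set partitions into the $G$-conjugates of $P-1$, $U-1$, and $S-1$. By Theorem \ref{OrbitsGPSU}(1), when $q>3$ there are exactly three orbits of components under the $G$-action, corresponding to the three families $\{P^x\}$, $\{U^x\}$, $\{S^x\}$; so I would write $\kappa(G)=|\orb_G(C_P)|+|\orb_G(C_U)|+|\orb_G(C_S)|$, since distinct orbits contribute disjoint sets of components and the number of components in a single orbit equals the orbit size (each component in an orbit is a distinct component, and the orbit-stabilizer count gives $|G:N_G(C)|$).

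First I would handle the $P$-family. By Theorem \ref{OrbitsGPSU}(2), $|\orb_G(C_P)|=q+1$, and as shown there each component is exactly $P^x-1$, so the $P$-orbit contributes $q+1$ components. Next I would compute the remaining two orbit sizes by splitting on the residue of $q \bmod 4$, using parts (3) and (4) of Theorem \ref{OrbitsGPSU} together with the orbit-stabilizer formula $|\orb_G(C)|=|G:N_G(C)|$ and the known normalizer orders $|N_G(U)|=2r=q-1$ (giving $N_G(U)\cong D_r$) and $|N_G(S)|=2t=q+1$ (giving $N_G(S)\cong D_t$), recalling $|G|=\tfrac{1}{k}q(q-1)(q+1)$ with $k=\gcd(q-1,2)=2$ for odd $q$. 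When $q\equiv 1 \bmod 4$, part (3) gives $|\orb_G(C_U)|=1$ (the $U$-components fuse into a single $G$-orbit with $N_G(C_U)=G$), while part (4) gives $|\orb_G(C_S)|\neq 1$, in which case $C_S=S-1$ and $|\orb_G(C_S)|=|G:N_G(S)|=\tfrac{q(q-1)}{2}$; summing $q+1+1+\tfrac{q(q-1)}{2}$ yields $q+\tfrac{(q-1)q}{2}+2$. Symmetrically, when $q\equiv 3 \bmod 4$, part (4) forces the $S$-components to fuse ($|\orb_G(C_S)|=1$) while the $U$-components do not, so $|\orb_G(C_U)|=|G:N_G(U)|=\tfrac{q(q+1)}{2}$, giving $q+1+\tfrac{q(q+1)}{2}+1=q+\tfrac{(q+1)q}{2}+2$.

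The even case $q\equiv 0 \bmod 4$ (that is, $q=2^f$ with $f\geq 2$) needs separate treatment because $k=\gcd(q-1,2)=1$, so $r=q-1$, $t=q+1$, and critically $G$ has even order but the subgroups $U,S$ are odd cyclic of coprime orders to one another; here neither the $U$-family nor the $S$-family fuses, so I expect $C_U=U-1$ and $C_S=S-1$ with $|\orb_G(C_U)|=|G:N_G(U)|=\tfrac{q(q+1)}{2}$ and $|\orb_G(C_S)|=|G:N_G(S)|=\tfrac{q(q-1)}{2}$; summing $q+1+\tfrac{q(q+1)}{2}+\tfrac{q(q-1)}{2}=q+1+q^2=q^2+q+1$ gives the stated value. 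I would verify the no-fusion claims in the even case by the same argument as in Theorem \ref{OrbitsGPSU}: a fusion of two conjugates $U,U'$ would force a common nonidentity element $z\in N_G(U)\cap N_G(U')$ of order $2$, but the dihedral normalizer of an odd cyclic group in characteristic $2$ has no central involution of the required type, so no fusion occurs.

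The main obstacle will be pinning down the even case rigorously, since Theorem \ref{OrbitsGPSU}(3)--(4) are stated only for odd $q$ (with hypotheses $q\geq 13$, $q\neq 7,9$) and say nothing about $q\equiv 0\bmod 4$; I would need to redo the fusion analysis for $q=2^f$ from scratch, carefully using that $N_G(U)\cong D_{q-1}$ and $N_G(S)\cong D_{q+1}$ are dihedral of even order $2(q\mp 1)$ yet the relevant reflections have order $2$ while $U,S$ have odd order, so an involution normalizing $U$ cannot centralize a nonidentity element of $U$ and force nilpotency. A secondary check is that the small excluded cases ($q=4,7,9$, and the boundary $q<13$) are consistent with Table \ref{tab1}; for instance $q=4$ gives $q^2+q+1=21$, $q=7$ gives $7+\tfrac{8\cdot 7}{2}+2=37$, and $q=11$ gives $11+\tfrac{12\cdot 11}{2}+2=79$, all matching the table, which provides a useful sanity check on the final formula even where the maximality hypotheses on $N_G(U),N_G(S)$ must be argued directly rather than cited.
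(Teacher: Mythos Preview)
Your overall strategy matches the paper's exactly: write $\kappa(G)=|\orb_G(C_P)|+|\orb_G(C_U)|+|\orb_G(C_S)|$ via Theorem \ref{OrbitsGPSU}(1)--(2), then evaluate the last two orbit sizes case by case. Two points need correction, though.

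First, your ``main obstacle'' is not an obstacle. Theorem \ref{OrbitsGPSU}(3)--(4) are not restricted to odd $q$; their hypotheses are only $q\geq 13$ and $q>3,\ q\neq 7,9$. For $q\equiv 0\bmod 4$ one has $q\not\equiv 1\bmod 4$ and $q\not\equiv 3\bmod 4$, so the biconditionals in (3) and (4) already give $|\orb_G(C_U)|\neq 1$ and $|\orb_G(C_S)|\neq 1$. No separate fusion analysis in characteristic $2$ is needed; this is precisely how the paper handles its Case 1. The small values $q=4,7,8,9,11$ are disposed of by Table \ref{tab1}, just as you verified numerically.

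Second, and more substantively, the inference ``$|\orb_G(C_S)|\neq 1$, hence $C_S=S-1$ and $|\orb_G(C_S)|=|G:N_G(S)|$'' is unjustified as written: knowing the orbit is nontrivial does not by itself pin down the stabilizer, nor does it force $C_S=S-1$. The missing step, which the paper makes explicit, is maximality. One always has $N_G(S)\leq N_G(C_S)$ (any $g\in N_G(S)$ conjugates $S-1\subseteq C_S$ into itself, so $g\cdot C_S=C_S$), and $|\orb_G(C_S)|\neq 1$ means $N_G(C_S)\lneq G$; since $N_G(S)$ is a maximal subgroup for the relevant $q$ (by \cite[Table 5]{Maximal}), this forces $N_G(C_S)=N_G(S)$ and hence $|\orb_G(C_S)|=|G:N_G(S)|$. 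The same argument applies to $N_G(U)$, and it is needed in \emph{every} case where an orbit has size greater than $1$, not only for the small $q$ you flag at the end.
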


\begin{proof}
If $q < 13$, $q \neq 2, 3, 5$, the statement is true (by Table \ref{tab1}). If $q \geq 13$, there are three cases:

\noindent\textbf{Case 1.} If $q \equiv 0 \bmod 4$, since $N_G(U)$ and $N_G(S)$ are maximal subgroups, see \cite[Table 5]{Maximal},  follows that $N_G(U) \leq N_G(C_U)$, and $N_G(S) \leq N_G(C_S)$. By Theorem \ref{OrbitsGPSU} (3) and (4), holds $|\orb_G(C_U)| \neq 1$ y $|\orb_G(C_S)| \neq 1$, then  $N_G(U) = N_G(C_U)$, $N_G(S) = N_G(C_S)$ and
\begin{align*}
 |\orb_G(C_U)| & = |G : N_G(U)| = \frac{(q+1)q}{2}, \\
 |\orb_G(C_S)| & = |G : N_G(S)| = \frac{(q-1)q}{2}.   
\end{align*}

On the other hand, by Theorem \ref{OrbitsGPSU} (1) and (2), $\left|\frac{\mathcal{C}(G)}{G}\right| = 3$ and $|\orb_G(C_P)| = q+1$, implying that 
\[\kappa(G) = |\orb_G(C_P)| + |\orb_G(C_U)| + |\orb_G(C_S)| = q^2 + q + 1.\]

\noindent\textbf{Case 2.} If $q \equiv 1 \bmod 4$, then by Theorem \ref{OrbitsGPSU} (3) and (4) follows
\begin{align*}
 |\orb_G(C_U)| & = |G : N_G(C_U)| = 1, \\
 |\orb_G(C_S)| & = |G : N_G(S)| = \frac{(q-1)q}{2}.   
\end{align*}
Therefore, $\kappa(G)  = q+2+\frac{(q-1)q}{2}$. 
 
\noindent\textbf{Case 3.} If $q \equiv 3 \bmod 4$, by the same argument, $\kappa(G) = q+2 + \frac{(q+1)q}{2}$.
\end{proof}

\begin{proposition}
The nilpotent graph of a finite non-nilpotent group, $G$ with nilpotentizer of even order, is non-Eulerian
\end{proposition}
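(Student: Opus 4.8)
The plan is to prove the stronger statement that \emph{every} vertex of $\Gamma_{\frakN}(G)$ has odd degree. Since a graph possessing an Eulerian circuit necessarily has all of its vertices of even degree, and since $G-\nil(G)\neq\emptyset$ (because $G$ is non-nilpotent, so $\nil(G)=Z^{*}(G)\neq G$ and the vertex set is nonempty), this at once forces $\Gamma_{\frakN}(G)$ to be non-Eulerian. By Lemma \ref{grado} one has $\deg(x)=|\nil_G(x)|-|\nil(G)|-1$ for each vertex $x$; as $|\nil(G)|$ is even by hypothesis, $\deg(x)$ is odd precisely when $|\nil_G(x)|$ is even. Hence the whole problem reduces to showing that $|\nil_G(x)|$ is even for every $x\in G-\nil(G)$.

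To obtain this, I would fix an involution $t\in\nil(G)=Z^{*}(G)$, which exists by Cauchy's theorem because $|\nil(G)|$ is even, and exhibit a fixed-point-free involution on the finite set $\nil_G(x)$, namely $g\mapsto gt$. This map is self-inverse (as $t^{2}=1$) and has no fixed point (as $t\neq 1$), so it will witness $|\nil_G(x)|$ even as soon as one checks that it sends $\nil_G(x)$ into itself.

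The crux is therefore the closure claim: if $\langle x,g\rangle$ is nilpotent and $n\in\nil(G)$, then $\langle x,gn\rangle$ is nilpotent. I would deduce this from the assertion that $K:=\langle x,g,n\rangle$ is nilpotent, of which $\langle x,gn\rangle$ is a subgroup. Put $N:=K\cap Z^{*}(G)$; since $Z^{*}(G)$ is characteristic in $G$ one has $N\unlhd K$, and $n\in N$. Exactly as in the proof of Lemma \ref{QuotientNilAdjacency}, every $z\in K\cap Z^{*}(G)$ satisfies $\langle z,k\rangle\in\frakN$ for all $k\in K$, so $z\in\nil(K)=Z^{*}(K)$; thus $N\leq Z^{*}(K)$, and as $K$ is finite this gives $N\leq\zeta_i(K)$ for some $i$. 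Moreover $K/N=\langle x,g\rangle N/N\cong\langle x,g\rangle/(\langle x,g\rangle\cap N)$ is a quotient of the nilpotent group $\langle x,g\rangle$, hence nilpotent. Theorem \ref{ClementNil} then yields that $K$ is nilpotent, so $gn\in\nil_G(x)$. Taking $n=t$ supplies the desired fixed-point-free involution; in fact the same computation shows $\nil_G(x)$ is a union of left cosets of $\nil(G)$, so $|\nil(G)|$ even already forces $|\nil_G(x)|$ even.

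The main obstacle is precisely this closure property. One cannot argue by commutativity, since a hypercentral element $n$ need not centralize $x$ or $g$; the point is instead that $n$ falls into the hypercenter of the small subgroup $K$, which is exactly the hypothesis needed to invoke Theorem \ref{ClementNil}. The remaining steps — Cauchy's theorem, the parity bookkeeping via Lemma \ref{grado}, and the observation that ``Eulerian'' demands even degrees — are routine, and I would be careful only to record that a single odd-degree vertex (here, in fact, every vertex) already rules out an Eulerian circuit.
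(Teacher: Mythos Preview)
Your argument is correct and follows the same line as the paper: both show that every vertex $x$ has odd degree by combining Lemma \ref{grado} with the fact that $|\nil_G(x)|$ is even. The only difference is that the paper obtains the evenness by invoking the divisibility $|Z^{*}(G)|\mid |\nil_G(x)|$ as a cited result (\cite[Teorema 4.1]{Kumar}), whereas you give a self-contained proof of this divisibility by showing, via Theorem \ref{ClementNil}, that $\nil_G(x)$ is a union of left cosets of $\nil(G)$; your intermediate step with the involution $t$ and the fixed-point-free map $g\mapsto gt$ is then just the special case of this coset decomposition.
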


\begin{proof}
Let $x \in G - Z^*(G)$. Then, deg$(x) = |nil_G(x)| - |Z^*(G)| - 1$. Since $|Z^*(G)| | |nil_G(x)|$ and $|Z^*(G)|$ is even, it follows that $|nil_G(x)|$ is even. Thus, deg$(x)$ is odd, which means the graph is non-Eulerian.
\end{proof}

\bibliographystyle{plain}

\end{document}